\theoremstyle{plain}
\newtheorem{prop}{Proposition}[section]
\newtheorem{theorem}{Theorem}
\newtheorem*{theorem*}{Theorem}
\newtheorem*{main}{Main Theorem}
\newtheorem*{lemma*}{Lemma}
\newtheorem{lemma}[prop]{Lemma}
\newtheorem{cor}[prop]{Corollary}
\newtheorem*{cor*}{Corollary}
\newtheorem{conj}[prop]{Conjecture}
\newtheorem*{conj*}{Conjecture}
\theoremstyle{definition}
\newtheorem{defi}[prop]{Definition}
\newtheorem{rem}[prop]{Remark}
\newtheorem*{rem*}{Remark}
\newtheorem{ex}[prop]{Example}
\newtheorem{cons}[prop]{Construction}
\numberwithin{equation}{section}
\newcommand{\mm}{\,\middle|\,}
\newcommand{\SL}{SL}
\newcommand{\SO}{SO}
\newcommand{\g}{\mathfrak{g}}
\renewcommand{\Re}{\mathbb{R}}
\newcommand{\Z}{\mathbb{Z}}
\newcommand{\N}{\mathbb{N}}
\newcommand{\C}{\mathbb{C}}
\newcommand{\Q}{\mathcal{Q}}
\newcommand{\PP}{\mathcal{P}}
\renewcommand{\L}{\mathcal{L}}
\newcommand{\W}{\mathcal{W}}
\DeclareMathOperator{\interior}{int}
\let\Im\relax
\DeclareMathOperator{\Im}{Im}
\DeclareMathOperator{\height}{ht}
\newcommand{\An}{\mathsf{A}_n}
\newcommand{\Bn}{\mathsf{B}_n}
\newcommand{\Cn}{\mathsf{C}_n}
\newcommand{\Dn}{\mathsf{D}_n}
\newcommand{\GG}{\mathsf{G}_2}
	\title{Reflexivity of Newton-Okounkov Bodies of Partial Flag Varieties}
	\author{Christian Steinert}
	\address{Mathematical Institute, Faculty of Mathematics and Natural Sciences, University of Cologne; Chair for Algebra and Representation Theory, RWTH Aachen University}
	\email{steinert@art.rwth-aachen.de}
\begin{document}

\begin{abstract}\noindent
	 Assume that the valuation semigroup $\Gamma(\lambda)$ of an arbitrary partial flag variety corresponding to the line bundle $\mathcal{L_\lambda}$ constructed via a full-rank valuation is finitely generated and saturated. We use Ehrhart theory to prove that the associated Newton-Okounkov body\,---\,which happens to be a rational, convex polytope\,---\,contains exactly one lattice point in its interior if and only if $\L_\lambda$ is the anticanonical line bundle. Furthermore we use this unique lattice point to construct the dual polytope of the Newton-Okounkov body and prove that this dual is a lattice polytope using a result by Hibi. This leads to an unexpected, necessary and sufficient condition for the Newton-Okounkov body to be reflexive.
\end{abstract}

\maketitle

\hypersetup{linkcolor=black,citecolor=black}


\section*{Introduction}

For quite some time researchers from different branches of mathematics have been interested in associating combinatorial objects (for example polytopes) to geometric objects (for example varieties). The textbook examples are of course toric varieties, where polytopes arise quite naturally encoding a lot of geometric information about the variety. Many people have been and are trying to make use of this fact by degenerating more complicated varieties into toric varieties\,---\,in particular Gonciulea and Lakshmibai \cite{GL}, Kogan and Miller \cite{KoM}, Caldero \cite{C}, Alexeev and Brion \cite{AB} as well as Feigin, Fourier and Littelmann \cite{FFL3}.

All of their approaches used polytopes that were already known to representation theorists because there has always been a strong interest in finding polytopes for representations to find new bases of these representations and thus the tools were already developed. Starting with the polytopes of Gelfand and Tsetlin in type $\An$ in \cite{GT} Berenstein and Zelevinsky defined {\em Gelfand-Tsetlin polytopes} for all classical Lie algebras in \cite{BZ1}. This approach lead to the construction of so called {\em string polytopes} for Lie algebras of arbitrary type that were studied by Littelmann in \cite{L} and Berenstein and Zelevinsky in \cite{BZ2}. A different {\em string polytope} has been defined by Nakashima and Zelevinsky in \cite{NZ}. Other prominent polytopes\,---\,usually called {\em Lusztig polytopes}\,---\,were defined by Lusztig in \cite{Lu}. A slightly different approach based on a conjecture by Vinberg led to the definition of {\em Feigin-Fourier-Littelmann-Vinberg polytopes} in types $\An$ \cite{FFL1} and $\Cn$ \cite{FFL2} by Feigin, Fourier and Littelmann. Gornitskii analogously defined {\em Gornitskii polytopes} in types $\Bn$ and $\Dn$ \cite{G2} as well as $\GG$ \cite{G}.

The most general approach to toric degenerations has been developed using Newton-Okounkov bodies, firstly defined by Okounkov in \cite{O} and \cite{O2}, by Lazarsfeld and Musta\cb{t}\u a \cite{LM}, Kaveh and Khovanskii \cite{KK} and Anderson \cite{A}. The formerly known representation theoretic polytopes can be realized as Newton-Okounkov bodies for some nice valuations, which has been shown by Kaveh \cite{Ka}, Kiritchenko \cite{Ki} and Fujita and Naito \cite{FN}. Most recently Kaveh and Manon analyzed the connection between Newton-Okounkov bodies and tropical geometry in \cite{KM}. A generalized method to construct most of the formerly mentioned polytopes in a representation theoretic setting\,---\,including Newton-Okounkov bodies\,---\,was developed by Fang, Fourier and Littelmann \cite{FFL4} via so called {\em birational sequences}.

In the context of this setting, reflexive polytopes appear naturally as Batyrev showed that they are in one-to-one correspondence to Gorenstein Fano toric varieties (see \cite[Proposition 2.2.23]{B}). Hence, finding reflexive polytopes means finding Gorenstein Fano toric degenerations.

Another viewpoint on polytopes associated to geometric objects arises in the theory of Mirror Symmetry. Most notably Batrev, Ciocan-Fontanie, Kim and van Straten used reflexive polytopes to construct mirror duality in \cite{BCKS} based on an idea of Batyrev \cite{B}. This approach was used by Rusinko \cite{R} to construct mirror duals for type $\An$ complete flag varieties using Littelmann's string polytopes, thereby recovering mirror families formerly described by Batyrev in \cite{B2}. The key point of his work was to prove that the duals of these polytopes are lattice polytopes in certain cases by observing that they contained a special lattice point in their interior. The goal of our paper is to understand this remarkable property in a more general setting.
Our main result is the following.

\begin{main}
	If the valuation semigroup $\Gamma(\lambda)$ associated to a partial flag variety $G/P$ via the $P$-regular dominant integral weight $\lambda$ and full-rank valuation $v$ is finitely generated and saturated, the following properties of the Newton-Okounkov body $\Delta(\lambda)$ are equivalent.
	\begin{enumerate}[label=(\roman*)]
		\item $\L_\lambda$ is the anticanonical line bundle over $G/P$.
		\item $\Delta(\lambda)$ contains exactly one lattice point $p_{\lambda}$ in its interior.
	\end{enumerate}
	Furthermore, in this case the polar dual\footnote{We always refer to the {\em polar dual} defined as $S^* := \left\lbrace y \in \Re^N \mm \langle x,y \rangle \leq 1 \text{ for all } x \in S\right\rbrace$ for an arbitrary set $S \subseteq \Re^N$. If $S$ is a polytope with the origin in its interior, then $S^*$ is a polytope.} of the translated Newton-Okounkov body $\Delta(\lambda) - p_{\lambda}$ is a lattice polytope.
\end{main}

Notice that this result applies to many of the formerly mentioned polytopes since most of them can be realized as Newton-Okounkov bodies of {\em nice} valuations.

Our statement bears resemblance to a result by Kaveh and Villella in \cite{KV} who were able to classify {\em anticanonical objects} in families of polyhedra associated to flag varieties purely via combinatorial conditions. However, their result needs stronger assumptions like Minkowski property of the occurring polytopes, which we do not need. 

This paper is structured as follows. For the definition of the objects in Newton-Okounkov theory and a collection of known facts see \cref{sec:no}. The proof of this theorem uses some results from Ehrhart theory and a result of Hibi \cite{Hi} that will be introduced in \cref{sec:ehr}. The proof itself is divided into multiple lemmata that will be stated and proved in \cref{sec:lem} and unified in the concluding proof of our Main Theorem in \cref{sec:proof}. An overview over certain applications and many examples of string polytopes finalize this paper in \cref{sec:more}. Most notably we will briefly elaborate on the following reflexivity criterion.

\begin{cor*}[\cref{cor:refl}]
	Under the assumptions of the Main Theorem the Newton-Okounkov body $\Delta(\lambda)$ is a reflexive polytope (after translation by a lattice vector) if and only if it is a lattice polytope and $\lambda$ is the weight of the anticanonical bundle over $G/P$.
\end{cor*}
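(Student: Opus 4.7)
My plan is to derive this reflexivity criterion directly from the Main Theorem, using only the standard definition of a reflexive polytope as a lattice polytope containing the origin in its interior whose polar dual is again a lattice polytope. Since the Main Theorem already bundles together the existence of a unique interior lattice point $p_\lambda$ and the lattice property of $(\Delta(\lambda) - p_\lambda)^*$ under the anticanonical assumption, the corollary should amount to little more than matching definitions on both sides.

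For the ``if'' direction I would assume that $\Delta(\lambda)$ is a lattice polytope and that $\lambda$ is the anticanonical weight. The Main Theorem then produces a unique interior lattice point $p_\lambda \in \Delta(\lambda)$ and asserts that $(\Delta(\lambda) - p_\lambda)^*$ is a lattice polytope. Because $p_\lambda$ is itself a lattice point, the translate $\Delta(\lambda) - p_\lambda$ remains a lattice polytope; it contains the origin in its interior by construction, and its polar dual is a lattice polytope by the Main Theorem. This matches the definition of reflexivity verbatim.

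For the converse, I would suppose that $\Delta(\lambda) - q$ is reflexive for some lattice vector $q$. By definition it is a lattice polytope containing the origin in its interior, and it is well known (and should be cited briefly) that the origin is then the \emph{only} interior lattice point of a reflexive polytope. Translating back, $\Delta(\lambda)$ is a lattice polytope whose unique interior lattice point is $q$, so condition (ii) of the Main Theorem is satisfied. The implication (ii) $\Rightarrow$ (i) of the Main Theorem then yields that $\L_\lambda$ is the anticanonical line bundle over $G/P$, i.e.\ $\lambda$ is the weight of the anticanonical bundle, which is what we had to show.

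The only delicate step is invoking the uniqueness of the interior lattice point of a reflexive polytope, since this is what guarantees that the translation vector $q$ must coincide with the $p_\lambda$ supplied by the Main Theorem; this is a standard fact but worth stating explicitly to make the correspondence unambiguous. Apart from that, I do not foresee any real obstacle, as the corollary is essentially a formal repackaging of the Main Theorem combined with the definition of reflexivity.
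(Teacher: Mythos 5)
Your argument is correct and is exactly the intended one: the paper states \cref{cor:refl} as an immediate consequence of the Main Theorem, and your unpacking (the ``if'' direction by translating by the lattice point $p_\lambda$, the ``only if'' direction via the standard fact that a reflexive polytope has the origin as its unique interior lattice point, so that condition (ii) of the Main Theorem applies) is precisely the formal repackaging the paper has in mind. No gaps.
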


Finally, we will give an example of a non-integral string polytope in type $\An$\,---\,hence disproving a conjecture by Alexeev in Brion\,---\,and pose a new conjecture on the integrality of string polytopes for very special reduced decompositions.

\textit{Acknowledgments.} This paper is a rewritten excerpt from my PhD thesis \cite{St} under the supervision of Peter Littelmann. I am very grateful to Peter Littelmann, Michel Brion, Xin Fang and Bea Schumann for many helpful discussions, for introducing me to the various mathematical concepts involved in this paper and for their invaluable, continued support.

\section{Facts from Newton-Okounkov Theory}\label{sec:no}

We will recall some important terminology regarding valuations and semigroups.

\begin{defi}
	Let $A$ be a $\C$-algebra and assume that $A$ is an integral domain. Fix a monoidal total ordering $\leq$ on $\Z^d$, i.e. a total ordering such that $a \leq b$ implies $a + c \leq b + c$ for all $a,b,c \in \Z^d$.
	
	A map $v \colon A \setminus \lbrace 0 \rbrace \to \Z^d$ is called a $\Z^d$-\textbf{valuation} on $A$ if it satisfies the following properties.
	\begin{enumerate}[label=(\roman*)]
		\item $v(\lambda f) = v(f)$ for all $\lambda \in \C^\times$ and $f \in A \setminus \lbrace 0 \rbrace$.
		\item $v(fg) = v(f)+v(g)$ for all $f,g \in A \setminus \lbrace 0 \rbrace$.
		\item $v(f+g) \geq \min\lbrace v(f), v(g) \rbrace$ for all $f,g \in A \setminus \lbrace 0 \rbrace$ such that $f+g \neq 0$.
	\end{enumerate}

	By slight abuse of notation we will denote the \textbf{valuation image} of $v$ as $\Im v := v(A\setminus \lbrace 0 \rbrace)$. We say that $v$ has \textbf{full rank} if the dimension of the $\Re$-linear span of the valuation image $\Im v \subseteq\Re^d$ equals the Krull dimension of $A$. We say that $v$ has \textbf{at most one-dimensional leaves} if the vector space
	\[ \left(\{ f \in A\setminus\{0\} \mid v(f) \geq s\}\cup \{0\}\right) / \left(\{ f\in A\setminus\{0\} \mid v(f) > s \} \cup \{0\}\right)\]
	is one-dimensional for every $s \in \Z^d$.
\end{defi}

\begin{rem}\label{rem:val}
	Condition (iii) yields the implication \[v(f+g) > \min \lbrace v(f), v(g) \rbrace \hspace{10pt}\Rightarrow\hspace{10pt} v(f) = v(g) \] for all $f,g \in A \setminus \lbrace 0 \rbrace$ such that $f + g \neq 0$. Indeed suppose that $v(f) \neq v(g)$ for some $f,g \in A \setminus \lbrace 0 \rbrace$. Without loss of generality let $v(f) < v(g)$. We can thus write $v(f) = v((f+g)+(-g)) \geq \min \lbrace v(f+g),v(g) \rbrace \geq \min \lbrace \min \lbrace v(f),v(g) \rbrace, v(g) \rbrace = v(f)$. Hence we have $v(f) = \min \lbrace v(f+g), v(g) \rbrace$. Since $v(f) < v(g)$ this implies $v(f+g) = v(f) = \min \lbrace v(f), v(g) \rbrace$.
\end{rem}

\begin{defi}
	Let $\Gamma$ be a semigroup in $\N \times \Z^d$. $\Gamma$ is called \textbf{finitely generated} if there exists a finite set of semigroup generators. We say that $\Gamma$ is \textbf{finitely generated in degree 1} if we can choose a finite set of semigroup generators with first coordinate equal to 1. $\Gamma$ is called \textbf{saturated} if for every $x \in \N \times \Z^d$ such that $mx \in \Gamma$ for some $m \in \Z_{>0}$ we find $x \in \Gamma$.
\end{defi}

We can now construct the main object of our studies.

\begin{cons}[Newton-Okounkov Body]
	Let $X$ be a normal projective variety of dimension $d$ and let $\L$ be an ample line bundle over $X$. Let \[R(X, \L) := \bigoplus_{m\in\N} H^0(X, \L^m)\] denote the associated ring of global sections and let $v\colon R(X, \L)\setminus\{0\} \to \Z^d$ be a valuation with respect to some monoidal order $\leq$ on $\Z^d$.
	We consider the graded monoid
	\[\Gamma(X,\L,v) := \bigcup_{m\in\N} \{ (m, v(f)) \mid f\in H^0(X, \L^m)\} \subseteq \N \times \Z^d,\]
	often called the \textbf{valuation monoid} or \textbf{valuation semigroup} with respect to $X$, $\L$ and $v$. It is indeed a monoid because for every $f\in H^0(X, \L^m)$ and $g\in H^0(X, \L^n)$ we have $fg\in H^0(X, \L^{m+n})$ and 
	\[ (m+n, v(fg)) = (m+n, v(f)+v(g)) = (m,v(f)) + (n, v(g)).\]
	Let 
	\[ \mathcal{C}(X, \L, v) := \overline{\operatorname{cone} \Gamma(X, \L, v)} \subseteq \Re_{\geq 0} \times \Re^d\]
	denote the closed cone over $\Gamma$. The \textbf{Newton-Okounkov body} $NO(X, \L, v)$ associated to $X$, $\L$ and $v$ is then defined as the intersection
	\[ \{1\} \times NO(X, \L, v) := \{ (1, x) \mid x \in \Re^d\} \cap \mathcal{C}(X, \L, v).\]
	
	In the context of flag varieties we will simply write $\Delta(\lambda)$ for the Newton-Okounkov body $NO(G/P, \L_\lambda, v)$ if the valuation has been fixed.
\end{cons}
	


We will now state some important results about valuations and Newton-Okounkov bodies. The first one is due to Kaveh and Manon \cite[Theorem 2.3]{KM}.

\begin{theorem}[Kaveh--Manon]
	Every full-rank valuation has at most one-dimensional leaves.
\end{theorem}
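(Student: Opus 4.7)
The plan is to argue by contradiction: assume the leaf
\[ V_s \;:=\; \left(\{f \in A\setminus\{0\} \mid v(f) \geq s\} \cup \{0\}\right)\big/\left(\{f \in A\setminus\{0\} \mid v(f) > s\} \cup \{0\}\right) \]
has $\C$-dimension at least $2$ for some $s \in \Z^d$, and to derive a clash with the full-rank hypothesis by passing to the residue field of $v$ extended to the fraction field and invoking Abhyankar's inequality.

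First I would extract two elements $f, g \in A$ with $v(f) = v(g) = s$ whose classes in $V_s$ are $\C$-linearly independent. Unwinding the definitions (and using \cref{rem:val} to rule out cancellation strictly below $s$), this independence is equivalent to
\[ v(\alpha f + \beta g) \;=\; s \quad \text{for every } (\alpha, \beta) \in \C^2 \setminus \{0\}. \]
Next I would extend $v$ to a valuation on $K := \mathrm{Frac}(A)$ via $v(p/q) := v(p) - v(q)$, which is well defined and whose value group $\Gamma_v \subseteq \Z^d$ has the same $\Re$-linear span as $\Im v$. Setting $h := f/g \in K$, the previous display translates into $v(h - c) = v(f - cg) - v(g) = s - s = 0$ for every $c \in \C$.

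In particular $h$ and each $h - c$ lie in the valuation ring $R_v \subseteq K$ but outside its maximal ideal $\mathfrak{m}_v$, so the residue class $\bar h \in k_v := R_v/\mathfrak{m}_v$ differs from every element of the canonical copy of $\C \subseteq k_v$. Since $\C$ is algebraically closed, this forces $\bar h$ to be transcendental over $\C$, hence $\mathrm{tr.deg}_{\C}(k_v) \geq 1$. Finally I would invoke Abhyankar's inequality
\[ \mathrm{tr.deg}_{\C}(k_v) \;+\; \mathrm{rat.rk}(\Gamma_v) \;\leq\; \mathrm{tr.deg}_{\C}(K) \;=\; \mathrm{Krull\,dim}(A), \]
and observe that the full-rank assumption says precisely that the rational rank of $\Gamma_v$ (equivalently, the dimension of the $\Re$-span of $\Im v$) equals $\mathrm{Krull\,dim}(A)$. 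This forces $\mathrm{tr.deg}_{\C}(k_v) = 0$, contradicting the transcendence of $\bar h$.

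I expect the main obstacle to be bookkeeping rather than conceptual: one has to verify that the paper's ``full rank'' condition (formulated in terms of the $\Re$-span of $v$ on $A$) really matches the rational rank of the value group of the extended valuation on $K$, and that Abhyankar's inequality is applicable in the stated generality (an arbitrary $\C$-algebra that is an integral domain, rather than only the finitely generated geometric case relevant to \cref{sec:no}). Once those identifications are in place, the contradiction is immediate and the rest of the argument is a short elementary manipulation of valuations.
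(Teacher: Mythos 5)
The paper offers no proof of this statement---it is quoted from Kaveh--Manon \cite{KM}---and your argument is essentially the proof given there: extend $v$ to the fraction field, use Abhyankar's inequality together with the algebraic closedness of $\C$ to control the residue field, and translate this back into one-dimensionality of the leaves (Kaveh--Manon argue directly that the residue field equals $\C$ and deduce that $f/g$ has a scalar residue; you run the contrapositive, but it is the same computation). The caveat you flag at the end is the only real point of care, and it is a genuine restriction rather than bookkeeping: Abhyankar's inequality itself holds for any valuation of $K=\operatorname{Frac}(A)$ trivial on $\C$, but the identification of the transcendence degree of $K$ over $\C$ with the Krull dimension of $A$ fails for arbitrary $\C$-algebra domains, and with the paper's literal definition of full rank the statement is then false as stated: for $A=\C(y)[x]_{(x)}$ the $x$-adic valuation has value image of rank $1=\dim A$, yet its leaf at $s=0$ is $A/\mathfrak{m}_A\cong\C(y)$, which is infinite-dimensional over $\C$. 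So your proof is correct precisely in the setting in which the theorem is stated by Kaveh--Manon and used in this paper, namely for $A$ essentially of finite type over $\C$ (such as the section ring $R(X,\L)$), where Krull dimension equals the transcendence degree of the fraction field and the contradiction you derive is immediate.
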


%

The next proposition is a conglomeration of results from Kaveh-Khovanskii \cite{KK} and Lazarsfeld-Musta\cb{t}\u a \cite{LM}. A proof can be found for example in \cite[Proposition 3.3.9]{St}.

\begin{prop}\label{prop:no}
	Let $X$ be a normal projective variety of dimension $d$, let $\L$ be an ample line bundle over $X$ and let $v\colon R(X, \L)\setminus\{0\} \to \Z^d$ be a full-rank valuation on the ring of global sections $R(X, \L)$ such that the associated semigroup $\Gamma(X, \L, v)$ is finitely generated and saturated. Then the Newton-Okounkov body $NO(X, \L, v)$ is a $d$-dimensional rational convex polytope with exactly $\dim H^0(X, \L, v)$ many lattice points and $n\cdot NO(X, \L, v) = NO(X, \L^n, v)$ for every $n\in\N$.
\end{prop}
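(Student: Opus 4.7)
The plan is to unpack the four separate claims (polyhedrality, rationality, $d$-dimensionality, and the lattice count $|NO(X,\L,v)\cap\Z^d|=\dim H^0(X,\L)$, plus the scaling identity) and handle each by combining finite generation, saturation, and the Kaveh--Manon theorem on one-dimensional leaves.

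First I would treat the polyhedral structure. Fixing a finite generating set $\{(m_i,u_i)\}$ of $\Gamma(X,\L,v)$, the cone $\cone\Gamma$ is automatically closed, so $\mathcal{C}(X,\L,v)=\cone\Gamma$ is a rational polyhedral cone in $\Re_{\geq 0}\times\Re^d$. Its slice at height $1$ is then a rational polyhedron by construction. To argue boundedness I would observe that $H^0(X,\mathcal{O}_X)=\C$ because $X$ is a projective variety, so the only element of $\Gamma$ with first coordinate zero is the origin; consequently $\mathcal{C}$ contains no nonzero ray inside $\{0\}\times\Re^d$, and the slice at height $1$ is compact. For $d$-dimensionality I would combine the full-rank hypothesis (so $\Im v$ spans $\Re^d$) with the existence of positive-degree sections to conclude $\dim_\Re\mathcal{C}=d+1$.

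The heart of the matter is the lattice-point identity. Finite generation together with saturation forces $\Gamma=\mathcal{C}\cap(\N\times\Z^d)$: the inclusion $\subseteq$ is immediate, while for $\supseteq$ any lattice point in the rational cone $\mathcal{C}$ admits some positive integer multiple inside the subsemigroup generated by the finitely many generators, and saturation then pulls the point itself back into $\Gamma$. Slicing at height $1$ identifies the lattice points of $NO(X,\L,v)$ with $\Gamma_1=v(H^0(X,\L)\setminus\{0\})$. Finally, invoking the Kaveh--Manon theorem quoted above (full rank implies at most one-dimensional leaves), the standard Okounkov-type filtration argument---where each element of the image of $v$ in degree $m$ contributes exactly one dimension to $H^0(X,\L^m)$---yields $|\Gamma_m|=\dim H^0(X,\L^m)$ for every $m$; specializing to $m=1$ gives the required count.

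The scaling identity follows from the observation that replacing $\L$ by $\L^n$ keeps only the multiples of $n$ among the admissible degrees and relabels $mn\mapsto m$; equivalently, the height-$1$ slice of $\mathcal{C}(X,\L^n,v)$ coincides with the height-$n$ slice of $\mathcal{C}(X,\L,v)$, and a dilation by $1/n$ matches this with $n\cdot NO(X,\L,v)$. The step I would expect to be the main obstacle is the lattice-point identification: passing from the semigroup $\Gamma_1$ to the set of all lattice points in the slice genuinely requires saturation, and converting the cardinality of $\Gamma_1$ into $\dim H^0(X,\L)$ genuinely requires one-dimensional leaves. Without either hypothesis one would only recover an asymptotic Hilbert--Samuel-type equality rather than the exact count needed to set up Ehrhart theory on $\Delta(\lambda)$ in the following sections.
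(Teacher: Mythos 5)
Your overall route is the standard one from the sources this proposition is quoted from (the paper itself gives no in-text proof, deferring to \cite{St}, \cite{KK}, \cite{LM}): use finite generation to see that $\mathcal{C}(X,\L,v)=\cone\Gamma$ is a rational polyhedral cone with compact height-one slice, use saturation to get $\Gamma=\mathcal{C}(X,\L,v)\cap(\N\times\Z^d)$ (your ``clear denominators, then saturate'' step is correct, provided you note that a rational point of a rational cone admits a \emph{rational} conic representation in the generators), use the one-dimensional-leaves property to convert $\#\Gamma_m$ into $\dim H^0(X,\L^m)$, and obtain the scaling identity from the Veronese relabelling $(nm,u)\mapsto(m,u)$ together with the observation that $n\cdot\Gamma(X,\L,v)$ lies in the Veronese part, so both semigroups span the same cone. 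All of this is sound and is exactly where finite generation, saturation and full rank enter.

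The one step that does not work as written is the $d$-dimensionality. You claim that $\dim_\Re\mathcal{C}=d+1$ follows from ``$\Im v$ spans $\Re^d$'' plus the existence of positive-degree sections, but spanning of the \emph{projection} of $\Gamma$ to $\Re^d$ does not force the cone to be full-dimensional: a semigroup contained in a proper linear subspace of $\Re^{1+d}$ (say all points of the form $(m,mw)$) can still project onto a spanning set, and then the height-one slice is a single point. (There is also a mismatch to resolve in the paper's own definition: full rank is phrased via the Krull dimension of $A$, which for $R(X,\L)$ is $d+1$, while the values lie in $\Z^d$; the intended meaning is rank $d=\dim X$, coming from a valuation of the function field.) The genuinely needed input here is ampleness: either run the Lazarsfeld--Musta\cb{t}\u a argument producing a small simplex inside the body, or---more in the spirit of your own proof---use the pieces you already have. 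Since $\Gamma=\mathcal{C}(X,\L,v)\cap(\N\times\Z^d)$ and leaves are at most one-dimensional, you get
\[
\#\bigl(m\cdot NO(X,\L,v)\cap\Z^d\bigr)=\#\Gamma_m=\dim H^0(X,\L^m),
\]
and by asymptotic Riemann--Roch for the ample bundle $\L$ the right-hand side grows like $c\,m^{d}$ with $c>0$; a rational polytope of dimension strictly less than $d$ has only $O(m^{d-1})$ lattice points in its $m$-th dilate, so $NO(X,\L,v)$ must be $d$-dimensional. With that replacement your proposal gives a complete proof.
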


\begin{rem}
	For the last claim we realize $R(X, \L^n)$ as a subring of $R(X, \L)$, so $v$ is a valuation on both rings.
\end{rem}

\section{Facts from Ehrhart Theory}\label{sec:ehr}

For the purpose of this section let $\PP$ denote a full-dimensional rational convex polytope in $\Re^d$. Let $\interior\PP = \PP\setminus\partial\PP$ denote its interior and $\PP^*$ its dual.

From the variety of interesting results that Ehrhart theory yields, we will only need the following two beautiful theorems. The first one is due to Ehrhart and Macdonald and can be found for example in \cite[Theorem 4.1]{BR}. It compares the number of lattice points of a polytope with the number of lattice points in its interior $\interior\PP$.

\begin{theorem}[Ehrhart-Macdonald Reciprocity]\label{thm:emr}
There exists a quasi-polynomial $L_\PP$ of degree $d$\,---\,called the \textbf{Ehrhart quasi-polynomial}\,---\,such that
\[L_{\PP}(n) = \#(n\PP\cap\Z^d) \hspace{10pt}\text{and}\hspace{10pt} (-1)^dL_\PP(-n) = \#(\interior n\PP \cap \Z^d).\]
\end{theorem}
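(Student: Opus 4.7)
The plan is to establish both assertions simultaneously via the theory of rational generating functions for lattice points in rational cones, following the standard approach of Beck--Robins. First I would reduce to the case of a rational simplex: choose a triangulation of $\PP$ into relatively open rational simplices $\sigma_0,\ldots,\sigma_r$ of possibly different dimensions. Since $\#(n\PP \cap \Z^d) = \sum_j \#(n\sigma_j \cap \Z^d)$ is a finite additive sum, and $\interior \PP$ is the union of those $\sigma_j$ not lying on $\partial\PP$, it suffices to establish the analogous statements for relatively open rational simplices and then reassemble.

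For a rational $d$-simplex $\sigma$ with vertices $w_0,\ldots,w_d \in \mathbb{Q}^d$, I would pass to the cone $K_\sigma := \cone\{(1,w_i)\}_{i=0}^d \subseteq \Re_{\geq 0}\times\Re^d$, so that lattice points in $n\sigma$ correspond bijectively to lattice points of $K_\sigma$ of height $n$. Clearing denominators gives integer generators $u_i = (k_i,k_iw_i)$, and every lattice point of $K_\sigma$ decomposes uniquely as the sum of a lattice point of the half-open fundamental parallelepiped of the $u_i$ plus a nonnegative integer combination of the $u_i$. This yields a rational generating function
\[ \sum_{n \geq 0} L_\sigma(n)\, z^n \;=\; \frac{h^*(z)}{\prod_{i=0}^d (1-z^{k_i})}, \]
whose partial-fraction expansion exhibits $L_\sigma(n)$ as a quasi-polynomial of degree $d$ with period dividing $\mathrm{lcm}(k_0,\ldots,k_d)$.

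For reciprocity I would invoke Stanley's reciprocity theorem for the integer-point transform of a rational pointed cone: if $\tau_K(z)$ and $\tau_{K^\circ}(z)$ denote the closed and open integer-point transforms respectively, then $\tau_{K^\circ}(z) = (-1)^{\dim K}\tau_K(z^{-1})$ as rational functions. Applied to $K_\sigma$ (of dimension $d+1$) and combined with the formal identity $1/(1-z^{-k}) = -z^k/(1-z^k)$, reading off the coefficient at height $n$ gives the desired identity $\#(\interior(n\sigma) \cap \Z^d) = (-1)^d L_\sigma(-n)$ on the open simplex.

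The main obstacle is the bookkeeping in passing from the relatively open simplices back to the closed polytope $\PP$ and its interior in a way that produces a single quasi-polynomial $L_\PP$ satisfying both halves of the theorem. One needs to verify that summing the simplex-level quasi-polynomials $L_{\sigma_j}$ really yields $L_\PP(n) = \#(n\PP\cap\Z^d)$ and that the signs appearing in Stanley reciprocity for each open simplex combine so that $(-1)^d L_\PP(-n)$ picks out exactly those simplices interior to $\PP$. Once this combinatorial matching is checked, the two identities follow at once for the assembled quasi-polynomial $L_\PP$.
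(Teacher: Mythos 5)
The paper itself offers no proof of this statement: it is imported as a known result with a citation to \cite[Theorem 4.1]{BR}, so there is no internal argument to compare against. Your proposal is essentially a reconstruction of the standard generating-function proof from that reference: cone over a rational simplex, fundamental-parallelepiped decomposition giving quasi-polynomiality of degree $d$ with period dividing the lcm of the vertex denominators, Stanley reciprocity for (simplicial) rational cones, then triangulation and reassembly. That is the right route, and all the simplex-level steps you describe are correct.

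However, the step you set aside as ``bookkeeping'' is the actual crux rather than a routine check. Writing $\PP$ as a disjoint union of relatively open rational simplices $\sigma_j$ and applying simplex-level reciprocity yields $(-1)^d L_\PP(-n) = \sum_j (-1)^{d-\dim\sigma_j}\,\#\bigl(n\bar\sigma_j\cap\Z^d\bigr)$, and for this to equal $\#(\interior n\PP\cap\Z^d)$ you must show that every lattice point $x\in n\PP$ receives total coefficient $1$ if $x\in\interior n\PP$ and $0$ if $x$ lies on the boundary; equivalently, that $\sum_{\sigma\supseteq\tau}(-1)^{d-\dim\sigma}$, summed over the closed star of the minimal cell $\tau$ containing $x/n$, equals $1$ or $0$ accordingly. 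This is a local Euler-characteristic statement\,---\,it amounts to the link of $\tau$ in the triangulated ball $\PP$ being a $(d-\dim\tau-1)$-sphere when $\tau$ meets the interior and a ball when $\tau\subseteq\partial\PP$\,---\,and this topological (or M\"obius-theoretic) input is precisely Macdonald's contribution; it does not ``follow at once.'' Two standard ways to close the gap: carry out that Euler-characteristic/M\"obius computation (Macdonald's original argument, or Stanley's valuation formulation), or sidestep inclusion-exclusion entirely by decomposing the cone over $\PP$ into \emph{half-open} simplicial cones, as done in \cite{BR}, for which reciprocity simply interchanges the half-open facet conditions and the contributions add up to the open cone without any cancellation argument. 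With either of these inserted, your argument is complete and coincides with the proof in the cited reference.
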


Notice that the Ehrhart quasi-polynomial is not unique, but its evaluation on each integer is.

We will also use the notation $L_S(n) := \#(nS\cap\Z^d)$ for arbitrary subsets $S\subseteq\Re^d$, but this function might not share similar properties..

The second result due to Hibi gives a criterion on the integrality of the vertices of the dual polytope $\PP^*$. It can be found in \cite{Hi}.

\begin{theorem}[Hibi]\label{thm:hibi}
Suppose $0 \in \interior\PP$. Then $\PP^*$ is a lattice polytope if and only if \[\#(n\PP\cap\Z^d) = \#(\interior (n+1)\PP\cap\Z^d)\]
for every $n\in\N$.
\end{theorem}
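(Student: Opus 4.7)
The plan is to translate both sides of the biconditional into explicit statements about the facet structure of $\PP$ and then compare them directly. Write
\[\PP = \{x \in \Re^d : \langle v_F, x\rangle \leq 1 \text{ for every facet } F\},\]
where each $v_F \in \mathbb{Q}^d$ is the vertex of the polar $\PP^*$ opposite to $F$; under this normalization $\PP^*$ is a lattice polytope precisely when $v_F \in \Z^d$ for every facet. Dilating yields $n\PP = \{x : \langle v_F, x\rangle \leq n \text{ for all } F\}$ and $\interior((n+1)\PP) = \{x : \langle v_F, x\rangle < n+1 \text{ for all } F\}$.

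A short convexity argument shows that $n\PP \cap \Z^d \subseteq \interior((n+1)\PP) \cap \Z^d$ for every $n \in \N$: for $y \in n\PP$ with $y \neq 0$ the point $\tfrac{1}{n+1} y$ is a strict convex combination of $\tfrac{1}{n} y \in \PP$ and $0 \in \interior\PP$, hence lies in $\interior\PP$, so $y \in \interior((n+1)\PP)$ (the case $n=0$ is immediate from $0 \in \interior\PP$). Consequently the cardinality hypothesis upgrades to the set equality $n\PP \cap \Z^d = \interior((n+1)\PP) \cap \Z^d$ for every $n$. Writing $h(y) := \max_F \langle v_F, y\rangle$, this set equality is equivalent to the assertion that for every $y \in \Z^d$ the value $h(y)$ avoids every open interval $(n, n+1)$ with $n \in \N$; equivalently, $h(y) \leq 0$ or $h(y) \in \Z_{>0}$.

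The ``if'' direction is then a one-line consequence: if every $v_F$ is integral, then $\langle v_F, y\rangle \in \Z$ for each $y \in \Z^d$, so $h(y)$ is automatically integer-valued and the reformulated condition holds trivially.

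For the converse, which I expect to be the main obstacle, I will deduce the integrality of each $v_F$ facet by facet. Fix a facet $F_0$ and consider the normal cone to $v_{F_0}$ in $\PP^*$,
\[C_{F_0} := \{y \in \Re^d : \langle v_{F_0}, y\rangle \geq \langle v_F, y\rangle \text{ for every facet } F\}.\]
Since $v_{F_0}$ is a vertex of $\PP^*$ this cone is full-dimensional, and since $0 \in \interior\PP^*$ (which follows from boundedness of $\PP$) the function $h(y) = \langle v_{F_0}, y\rangle$ is strictly positive on $\interior C_{F_0} \setminus \{0\}$. Pick a rational $y_0 \in \interior C_{F_0}$ and set $y := N y_0$ for a positive integer $N$ large enough that $y \in \Z^d$ and each of $y, y-e_1, \dots, y-e_d$ still lies in $\interior C_{F_0}$ with strictly positive support value (possible because $\interior C_{F_0}$ is open and invariant under positive scaling). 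The reformulated hypothesis then forces both $h(y)$ and each $h(y-e_i)$ to be positive integers, and subtracting the two pairings yields $\langle v_{F_0}, e_i\rangle \in \Z$ for every standard basis vector $e_i$. Hence $v_{F_0} \in \Z^d$, and since $F_0$ was arbitrary this completes the proof.
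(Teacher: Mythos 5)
Your proof is correct, and it is worth noting that the paper itself does not prove this statement at all: Hibi's theorem is quoted with a reference to \cite{Hi}, so your argument is a self-contained substitute rather than a variant of an argument in the text. Your route is a direct support-function argument: normalize the irredundant facet description of $\PP$ as $\langle v_F,x\rangle\leq 1$ (possible exactly because $0\in\interior\PP$, and the $v_F$ are then the vertices of $\PP^*$), observe that $n\PP\cap\Z^d\subseteq\interior\bigl((n+1)\PP\bigr)\cap\Z^d$ always holds so that the counting hypothesis upgrades to a set equality, and translate everything into the statement that $h(y)=\max_F\langle v_F,y\rangle$ never falls in an open interval $(n,n+1)$ for lattice $y$. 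The ``if'' direction is then trivial, and for the converse your normal-cone trick is sound: $C_{F_0}$ is full-dimensional because $v_{F_0}$ is a vertex of $\PP^*$, $h$ is linear and equal to $\langle v_{F_0},\cdot\rangle$ on it, $h>0$ off the origin because $0\in\interior\PP^*$ (boundedness of $\PP$, as you say), and differencing $h(y)-h(y-e_i)\in\Z$ for a deep lattice point $y$ of the cone gives $v_{F_0}\in\Z^d$ facet by facet. Compared with simply invoking \cite{Hi}, your argument buys a short, elementary, Ehrhart-free proof (the reciprocity reformulation in \cref{rem:hibi} is then genuinely a separate remark), valid for arbitrary rational $\PP$ with $0$ in its interior, which is exactly the generality the paper needs.
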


\begin{rem}\label{rem:hibi}
	For computational purposes it is useful to notice that the condition in Hibi's \cref{thm:hibi} can be reformulated as
	\[ L_\PP(n) = (-1)^dL_\PP(-n-1) \hspace{10pt} \text{for all} n\in\N \]
	using Ehrhart-Macdonald Reciprocity (see \cref{thm:emr}).
\end{rem}

\section{Key Lemmata}\label{sec:lem}

Let us fix some notation.

Let $G$ be a simple algebraic group of rank $r$ with Lie algebra $\g$. Let $T$ be a maximal torus of $G$ and $B$ a Borel subgroup of $G$ containing $T$. Let $P$ be a parabolic subgroup of $G$ containing $B$ and let $L$ be the Levi subgroup of $P$ containing $T$. Let $\W := N_G(T)/T$ denote the Weyl group of $G$.

Let $\Phi$ be the set of roots of $G$ and let $\Phi^+$ be the subset of positive roots with respect to $B$. Denote the set of simple roots by $S = \lbrace \alpha_1, \ldots, \alpha_r\rbrace$. Let $N$ be the number of positive roots.

Let $\Lambda$ be the lattice of integral weights of $G$ and $\Lambda^+$ the subset of dominant integral weights with respect to $B$. Let $\omega_i \in \Lambda^+$ be the fundamental weight corresponding to $\alpha_i \in S$ and $\rho := \frac{1}{2} \sum_{\beta \in \Phi^+} \beta = \sum_{i=1}^{r}\omega_i$.

We know (see \cite[Theorem 8.4.3]{S}) that there exists a set of simple roots $I \subseteq S$ such that $P = \bigcup_{w \in \W_I} B\tilde{w}B$, where $\W_I \subseteq \W$ is the Weyl group generated by the simple reflections $\left\lbrace s_\alpha \mm \alpha \in I\right\rbrace$ and $\lbrace \tilde{w} \in N_G(T) \,|\, w \in \W\rbrace$ is a set of representatives for the Weyl group elements. Let $\langle I \rangle := \Phi \cap \lbrace \sum_{\alpha \in I} m_\alpha \alpha \,|\, m_\alpha \in \Z\rbrace$ and $\langle I \rangle^+ := \langle I \rangle \cap \Phi^+$. We define $\Lambda_P := \left\lbrace \lambda \in \Lambda \mm \langle \lambda, \alpha^\vee \rangle = 0 \text{ for all } \alpha \in I\right\rbrace$ and $\Lambda_P^+ := \Lambda_P \cap \Lambda^+$ as well as $\Phi^+_P := \Phi^+ \setminus \langle I \rangle^+$. Let $N_P$ be the cardinality of $\Phi^+_P$.

A dominant weight $\lambda \in \Lambda$ extends to a character of $P$ if and only if $\lambda \in \Lambda_P$. For every such $\lambda$ we define the one-dimensional vector space $\C_{-\lambda}$ with $P$-action given by $p.x := \lambda(p)^{-1}x$. We will consider the line bundle $\L_{P,\lambda} := G \times_P \C_{-\lambda}= (G \times \C_{-\lambda})/P$ over $G/P$ where the $P$-action on $G \times \C_{-\lambda}$ is given by $p.(g,x) := (gp,p^{-1}.x)$. We know that for a dominant weight $\lambda \in \Lambda_P^+$ the line bundle $\L_{P,\lambda}$ is ample if and only if $\lambda$ is $P$-regular, i.e. $\lambda \in \Lambda_P^+$ and $\langle \lambda, \alpha^\vee \rangle > 0$ for all $\alpha \in S \setminus I$. We will just write $\L_\lambda$ for $\L_{P,\lambda}$ if the parabolic is fixed. We will always implicitly exclude the trivial case $I = S$.

Our whole proof is based on the following computation of the coefficients of the Ehrhart series. Assumptions are as in the Main Theorem.

\begin{lemma}\label{lemma:ehrhart}
	$L_{\Delta(\lambda)}(n) = \prod_{\beta \in \Phi_P^+}\frac{\langle n\lambda + \rho, \beta^\vee\rangle}{\langle \rho, \beta^\vee \rangle}$ for all $n \in \Z$ and $\lambda \in \Lambda_P^+$.
\end{lemma}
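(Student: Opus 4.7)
The plan is to identify the Ehrhart function $L_{\Delta(\lambda)}$ with the Weyl dimension polynomial $n \mapsto \dim V(n\lambda)$, and then observe that most factors of the Weyl formula collapse because $\lambda \in \Lambda_P$. First I would invoke \cref{prop:no} applied to each $\L_\lambda^n$, using $n \cdot NO(G/P, \L_\lambda, v) = NO(G/P, \L_\lambda^n, v)$, to conclude that the number of lattice points of $n\Delta(\lambda)$ equals $\dim H^0(G/P, \L_{n\lambda})$ for every $n \in \N$. By Borel--Weil for partial flag varieties this section space is (dual to) the irreducible highest-weight representation $V(n\lambda)$, so
\[ L_{\Delta(\lambda)}(n) = \dim V(n\lambda) \quad \text{for all } n \in \N. \]

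Next I would apply the Weyl dimension formula to rewrite the right-hand side as $\prod_{\beta \in \Phi^+} \frac{\langle n\lambda + \rho, \beta^\vee\rangle}{\langle \rho, \beta^\vee\rangle}$. For every $\beta \in \langle I \rangle^+$ the coroot $\beta^\vee$ lies in the $\Z$-span of $\{\alpha^\vee : \alpha \in I\}$, because $\beta$ is a root of the Levi and the coroots of a subsystem generated by simple roots lie in the coroot lattice of that subsystem. Hence $\langle \lambda, \beta^\vee\rangle = 0$ by the very definition of $\Lambda_P$, the corresponding factor equals $1$, and the product reduces to the one indexed by $\Phi_P^+ = \Phi^+ \setminus \langle I \rangle^+$, giving the stated identity for $n \in \N$.

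To extend from $\N$ to $\Z$, note that the right-hand side is patently a polynomial in $n$ of degree $|\Phi_P^+| = N_P$, which equals both $\dim G/P$ and $\dim \Delta(\lambda)$. The left-hand side is, by \cref{thm:emr}, a quasi-polynomial of that same degree; but since it agrees with a polynomial on all of $\N$, each constituent polynomial of the quasi-polynomial must agree with the right-hand side on a full arithmetic progression, forcing $L_{\Delta(\lambda)}$ itself to equal that polynomial. The identity therefore persists on all of $\Z$.

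The main delicate point is the first step, namely checking that the Main Theorem's hypothesis on $\Gamma(\lambda)$ really allows us to use \cref{prop:no} across all Veronese subrings $R(G/P, \L_\lambda^n)$. This is in essence automatic: the associated semigroup at power $n$ inherits saturation from $\Gamma(\lambda)$, and by Kaveh--Manon the valuation still has at most one-dimensional leaves, so the lattice points of $n\Delta(\lambda)$ biject with the valuation images in degree $n$, which in turn biject with a basis of $H^0(G/P, \L_{n\lambda})$.
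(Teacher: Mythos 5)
Your proposal is correct and follows essentially the same route as the paper: use \cref{prop:no} to identify $L_{\Delta(\lambda)}(n)$ with $\dim H^0(G/P,\L_\lambda^n) = \dim V(n\lambda)$ for $n\in\N$, apply a dimension formula, and extend from $\N$ to $\Z$ by observing the right-hand side is a genuine polynomial. The only cosmetic difference is that the paper invokes Kostant's \cref{thm:kostant} for the product over $\Phi_P^+$, whereas you derive the same expression from the full Weyl dimension formula by cancelling the factors with $\beta\in\langle I\rangle^+$ via $\langle\lambda,\beta^\vee\rangle=0$, which is an equally valid shortcut.
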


Before we prove this statement, we want to recall a result by Kostant that is a combination of the famous Borel--Weil--Bott Theorem and Weyl's Character Formula. It can be found in \cite[Corollary 5.14]{K}.

\begin{theorem}[Kostant]\label{thm:kostant}
	Let $\lambda \in \Lambda_P^+$. Then $\dim H^0(G/P, \L_\lambda) = \prod_{\beta \in \Phi_P^+}\frac{\langle\lambda + \rho, \beta^\vee\rangle}{\langle \rho, \beta^\vee \rangle}$.
\end{theorem}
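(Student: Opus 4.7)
The plan is to compute $L_{\Delta(\lambda)}(n)$ for positive integers $n$ by combining the scaling and lattice-point count assertions of \cref{prop:no} with Kostant's dimension formula (\cref{thm:kostant}), and then to extend the resulting polynomial identity from $\N$ to all of $\Z$ using the rigidity of quasi-polynomials.

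First, under the hypotheses of the Main Theorem, \cref{prop:no} applies to the ample line bundle $\L_\lambda$. For any $n \in \N$, one has $\L_\lambda^n = \L_{n\lambda}$, and $n\lambda$ is again $P$-regular, so \cref{prop:no} also applies to $\L_{n\lambda}$ with the restricted valuation. The scaling property gives $n\cdot \Delta(\lambda) = \Delta(n\lambda)$, and the lattice-point count yields
\[ L_{\Delta(\lambda)}(n) \;=\; \#\bigl(n\Delta(\lambda) \cap \Z^{N_P}\bigr) \;=\; \#\bigl(\Delta(n\lambda) \cap \Z^{N_P}\bigr) \;=\; \dim H^0(G/P, \L_{n\lambda}). \]
Applying \cref{thm:kostant} to $n\lambda \in \Lambda_P^+$ then identifies this with the desired product
\[ \prod_{\beta \in \Phi_P^+} \frac{\langle n\lambda + \rho, \beta^\vee\rangle}{\langle \rho, \beta^\vee\rangle}, \]
which is a polynomial in $n$ of degree $N_P = \dim(G/P) = \dim \Delta(\lambda)$.

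To pass from $n \in \N$ to all $n \in \Z$, I would invoke the general fact that an Ehrhart quasi-polynomial is uniquely determined by its values on the positive integers. Concretely, $L_{\Delta(\lambda)}$ has some period $T \geq 1$, and on each residue class $r \pmod T$ it agrees with an honest polynomial $p_r(n)$; each $p_r$ coincides with the right-hand side above on infinitely many integers, hence as a polynomial in $n$. Thus all the $p_r$ coincide, the quasi-polynomial is a genuine polynomial, and the claimed equality holds for every $n \in \Z$.

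The only nontrivial step is this last rigidity argument, but it is a standard consequence of the fact that a polynomial of bounded degree is determined by its values on any infinite set; everything else is a direct combination of the cited results.
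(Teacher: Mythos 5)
Your argument does not prove the statement it was supposed to prove: the target is Kostant's dimension formula for $\dim H^0(G/P,\L_\lambda)$ (\cref{thm:kostant}), yet your proposal invokes precisely that formula (``applying \cref{thm:kostant} to $n\lambda$'') as an ingredient. What you have actually written is, in substance, a proof of \cref{lemma:ehrhart}, i.e.\ of the identity $L_{\Delta(\lambda)}(n)=\prod_{\beta\in\Phi_P^+}\langle n\lambda+\rho,\beta^\vee\rangle/\langle\rho,\beta^\vee\rangle$, and it follows the paper's proof of that lemma closely (your shortcut $\L_\lambda^n=\L_{n\lambda}$ replaces the paper's product-map and lowest-weight argument identifying $H^0(G/P,\L_\lambda^n)$ with $V(n\lambda)^*$, and the passage from $\N$ to $\Z$ via rigidity of quasi-polynomials is the same). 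As a proof of \cref{thm:kostant} itself the argument is circular: the product formula is never established independently, so there is a genuine gap.

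The approach is also conceptually backwards. \cref{thm:kostant} is an unconditional statement of classical representation theory, whereas \cref{prop:no} is only available under the extra hypotheses of the Main Theorem (full-rank valuation, $\Gamma(G/P,\L_\lambda,v)$ finitely generated and saturated), and counting lattice points of $\Delta(\lambda)$ cannot produce the dimension formula unless one already knows it; in the paper the logical flow runs in the opposite direction, from \cref{thm:kostant} to \cref{lemma:ehrhart}. The justification of the statement itself is purely cohomological: by Borel--Weil(--Bott), $H^0(G/P,\L_\lambda)\simeq V(\lambda)^*$ with $V(\lambda)$ the irreducible $G$-representation of highest weight $\lambda$; Weyl's dimension formula gives $\dim V(\lambda)=\prod_{\beta\in\Phi^+}\langle\lambda+\rho,\beta^\vee\rangle/\langle\rho,\beta^\vee\rangle$; and since $\lambda\in\Lambda_P^+$ satisfies $\langle\lambda,\beta^\vee\rangle=0$ for every $\beta\in\langle I\rangle^+$, each factor indexed by $\langle I\rangle^+$ equals $1$, leaving exactly the product over $\Phi_P^+$. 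The paper does not reprove this but cites it from Kostant \cite[Corollary 5.14]{K}; any self-contained proof must go through some form of Borel--Weil together with the Weyl/Kostant dimension formula rather than through Ehrhart theory.
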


\begin{proof}[Proof of \cref{lemma:ehrhart}]
	By \cref{prop:no} (or equivalently \cite[Proposition 4.1]{LM}) we know that 
	\[n\cdot NO(G/P, \L_\lambda) = NO(G/P,\L_\lambda^n,v)\]
	and hence
	\[ L_{\Delta(\lambda)}(n) = \dim H^0(G/P,\L_\lambda^n) \]
	for every $n\in\N$. We want to show that this is equal to $\dim V(n\lambda)$. The claim then follows from Kostant's \cref{thm:kostant}.
	
	Consider the $n$-fold product map 
	\[ H^0(G/P, \L_\lambda) \times \ldots \times H^0(G/P, \L_\lambda) \to H^0(G/P, \L_\lambda^n).\]
	Notice that $\L_\lambda^n$ is ample, so $H^0(G/P, \L_\lambda^n)$ will be an irreducible $\g$-representation by Borel--Weil--Bott. Since this product map is $\g$-equivariant, its image must be a subrepresentation. The image is obviously not empty, so the product map is surjective. 
	
	Let $f_\lambda\in H^0(G/P, \L_\lambda)\simeq V(\lambda)^*$ be the global section corresponding to the lowest weight. Then the product $f_\lambda^n\in H^0(G/P, \L_{\lambda}^n)$ must be the lowest weight vector of $H^0(G/P, \L_{\lambda}^n)$. Since its weight is $-n\lambda$ we see that $H^0(G/P, \L_{\lambda}^n)$ is isomorphic to $V(n\lambda)^*$.
	
	Now the claim for positive integers follows from Kostant's version of Weyl's Dimension Formula in \cref{thm:kostant}. Notice that the right hand side of 
	\[
	L_{\Delta(\lambda)}(n) = \prod_{\beta \in \Phi_P^+}\frac{\langle n\lambda + \rho, \beta^\vee\rangle}{\langle \rho, \beta^\vee \rangle}\]
	can be seen as the evaluation of a polynomial at positive integers. Hence the Ehrhart quasi-polynomial must be a polynomial and since the two polynomials coincide on all positive integers, they coincide on all integers.
\end{proof}

\begin{ex}
	For the full flag variety and the anticanonical weight $2\rho$ we get
	\[L_{\Delta(\lambda)} = (2n+1)^N\]
	for every $n\in\Z$, where $N$ denotes the number of positive roots.
\end{ex}

The following lemmata state important results on the Weyl group $\W_I \subseteq \W$ corresponding to $P$. Let $w_I \in \W_I$ denote the longest word of $\W_I$.

\begin{lemma}\label{lemma:w_I}
	$w_I(\Phi_P^+) = \Phi_P^+$ and $w_I(\langle I \rangle^+) = - \langle I \rangle^+$.
\end{lemma}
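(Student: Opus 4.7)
The plan is to treat the two claims separately, starting with the first.

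For $w_I(\Phi_P^+) = \Phi_P^+$, I would note that $w_I$ is a product of simple reflections $s_\alpha$ with $\alpha \in I$, so it suffices to prove that each such $s_\alpha$ preserves the set $\Phi_P^+ = \Phi^+ \setminus \langle I\rangle^+$. Take $\beta \in \Phi_P^+$ and expand $\beta = \sum_{i=1}^{r} c_i\alpha_i$ with all $c_i \geq 0$. Because $\beta \notin \langle I\rangle$, there exists some $\alpha_j \in S \setminus I$ with $c_j > 0$. The key observation is that $s_\alpha(\beta) = \beta - \langle \beta,\alpha^\vee\rangle\alpha$ only alters the coefficient of $\alpha \in I$, so the coefficients of all $\alpha_i \in S\setminus I$ are unchanged. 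In particular the coefficient of $\alpha_j$ in $s_\alpha(\beta)$ is still $c_j > 0$. Since $s_\alpha(\beta)$ is a root, and since every root has coefficients either all $\geq 0$ or all $\leq 0$, this forces $s_\alpha(\beta) \in \Phi^+$; and since its $\alpha_j$-coefficient is nonzero with $\alpha_j \notin I$, it cannot lie in $\langle I\rangle^+$. Hence $s_\alpha(\Phi_P^+) \subseteq \Phi_P^+$, and bijectivity of $s_\alpha$ gives equality.

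For $w_I(\langle I\rangle^+) = -\langle I\rangle^+$, I would invoke the standard fact that $w_I$, being the longest element of the Weyl group $\W_I$ of the root subsystem $\langle I\rangle$, sends every positive root of that subsystem to a negative one. More precisely, $\W_I$ acts on $\langle I\rangle$ as its Weyl group, $w_I$ has length $|\langle I\rangle^+|$, and the length of an element $w\in\W_I$ equals the number of positive roots of $\langle I\rangle$ sent to negative roots; so $w_I$ must invert all of $\langle I\rangle^+$, yielding the claimed equality.

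I expect no real obstacle here: both statements are essentially standard facts about longest elements and parabolic subsystems, and the main content is just the bookkeeping with simple-root coefficients in the first part. The only subtlety worth being careful about is that a Levi-type reflection might in principle move a root of $\Phi_P^+$ inside $\langle I\rangle$, and the coefficient argument above is precisely what rules this out.
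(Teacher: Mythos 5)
Your proof is correct and follows essentially the same route as the paper: the first claim rests on the observation that elements of $\W_I$ leave the coefficients of simple roots outside $I$ unchanged (you check this reflection by reflection, the paper argues directly for $w_I$), and the second claim is the standard fact that the longest element of $\W_I$ inverts the positive roots of the subsystem $\langle I\rangle^+$.
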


\begin{proof}
	Since $\W_I$ is generated by all simple reflections $\lbrace s_\alpha \,|\, \alpha \in I\rbrace$ we know that $w_I(\langle I \rangle ) = \langle I \rangle$. Since $w_I \in \W$ we also have $w_I(\Phi) = \Phi$, thus $w_I(\Phi_P^+) \subseteq \Phi_P^+ \stackrel{\cdot}{\cup} -\Phi_P^+$. But for every $\beta = \sum_{\alpha \in S}m_\alpha \alpha \in \Phi^+_P$ there is at least one $\alpha \in S \setminus I$ such that $m_\alpha >0$. Since $w_I \in \langle s_\alpha \,|\, \alpha \in I \rangle$ this sign cannot be changed by $w_I$. This yields $w_I(\Phi^+_P) = \Phi^+_P$.
	
	The second part follows from the fact that $w_I$ is the longest word of the Weyl group $\W_I$ corresponding to the Levi $L_I$, so it sends positive roots of $L_I$ with respect to $B \cap L_I$ onto negative roots and vice versa.
\end{proof}

\begin{lemma}\label{lemma:acweight}
	The weight of the anticanonical bundle over $G/P$ is $\lambda_{G/P} = \rho + w_I(\rho)$.
\end{lemma}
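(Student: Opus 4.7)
The plan is to compute the weight of the anticanonical bundle directly from the $P$-module structure of the tangent space at the base point, and then to see via the previous lemma that this sum of positive roots is precisely $\rho + w_I(\rho)$.

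First I would identify the weight explicitly. Recall that $\mathfrak{p} = \Lie(P)$ decomposes as $\mathfrak{h} \oplus \bigoplus_{\alpha \in \Phi^+}\g_\alpha \oplus \bigoplus_{\alpha \in \langle I\rangle^+}\g_{-\alpha}$, so the tangent space at $eP$ is $\g/\mathfrak{p} \simeq \bigoplus_{\alpha \in \Phi_P^+}\g_{-\alpha}$ as a $T$-module. Hence the cotangent space has weights $\{\alpha : \alpha \in \Phi_P^+\}$, and the canonical bundle $\Omega^{N_P}_{G/P}$ has fiber character $\sum_{\alpha \in \Phi_P^+}\alpha$ at the origin. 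Translating into the paper's convention $\L_\lambda = G\times_P \C_{-\lambda}$, in which the fiber at $eP$ has $T$-weight $-\lambda$, this means the anticanonical bundle is $\L_{\lambda_{G/P}}$ with
\[ \lambda_{G/P} = \sum_{\alpha \in \Phi_P^+}\alpha. \]

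Next I would rewrite this as $\rho + w_I(\rho)$ using \cref{lemma:w_I}. Setting $\rho_P := \tfrac{1}{2}\sum_{\alpha \in \Phi_P^+}\alpha$ and $\rho_I := \tfrac{1}{2}\sum_{\alpha \in \langle I\rangle^+}\alpha$, the disjoint decomposition $\Phi^+ = \Phi_P^+ \stackrel{.}{\cup} \langle I\rangle^+$ gives $\rho = \rho_P + \rho_I$. Because $w_I$ permutes $\Phi_P^+$ we have $w_I(\rho_P) = \rho_P$, and because $w_I$ sends $\langle I\rangle^+$ to $-\langle I\rangle^+$ we have $w_I(\rho_I) = -\rho_I$. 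Therefore
\[ \rho + w_I(\rho) = (\rho_P + \rho_I) + (\rho_P - \rho_I) = 2\rho_P = \sum_{\alpha \in \Phi_P^+}\alpha = \lambda_{G/P}, \]
which is the desired equality.

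The only genuine obstacle is keeping the sign conventions straight, in particular the minus sign in the definition $\L_\lambda = G \times_P \C_{-\lambda}$ and the passage from tangent bundle (canonical) to cotangent bundle (anticanonical). Once those are handled correctly, the whole statement reduces to the clean splitting $\rho = \rho_P + \rho_I$ together with the two eigenvalue computations supplied by \cref{lemma:w_I}.
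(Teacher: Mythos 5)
Your proposal is correct and follows essentially the same route as the paper: identify the anticanonical weight as $\sum_{\beta\in\Phi_P^+}\beta$ via the top wedge power of the tangent space at $eP$, then use \cref{lemma:w_I} (that $w_I$ permutes $\Phi_P^+$ and negates $\langle I\rangle^+$) to see that $\rho + w_I(\rho)$ equals this sum. Your splitting $\rho = \rho_P + \rho_I$ with $w_I(\rho_P)=\rho_P$, $w_I(\rho_I)=-\rho_I$ is just a tidier packaging of the paper's term-by-term computation.
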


\begin{proof}
	We know that the anticanonical bundle is the dual of the highest wedge power of the tangent space of $G/P$ whose weight is exactly $\sum_{\beta \in \Phi^+_P}\beta$. On the other hand we have
	\begin{align*}
		\rho + w_I(\rho) &= \frac{1}{2}\sum_{\beta \in \Phi^+}\beta + \frac{1}{2}\left(\sum_{\beta \in \langle I\rangle^+}w_I(\beta) + \sum_{\beta \in \Phi_P^+}w_I(\beta)\right)\\
		&= \frac{1}{2}\sum_{\beta \in \langle I\rangle^+}\beta + \frac{1}{2}\sum_{\beta \in \Phi_P^+}\beta - \frac{1}{2}\sum_{\beta \in \langle I\rangle^+}\beta + \frac{1}{2}\sum_{\beta \in \Phi_P^+}\beta 
		= \sum_{\beta \in \Phi_P^+}\beta		
	\end{align*}
	since $w_I$ permutes all elements of $\Phi_P^+$ and sends all the elements of $\langle I \rangle^+$ onto elements of $-\langle I \rangle^+$ bijectively as we proved in \cref{lemma:w_I}.
\end{proof}


The following lemma on root systems seems rather technical, but it is crucial to the proof of our Main Theorem.

\begin{lemma}\label{lemma:nonneg}
	Let $\lambda \in \Lambda_P^+$ be $P$-regular. Suppose there exists $\beta \in \Phi_P^+$ such that $\langle \lambda-\rho, \beta^\vee \rangle < 0$. Then there exists $\tilde\beta \in \Phi^+_P$ such that $\langle \lambda-\rho, \tilde{\beta}^\vee\rangle = 0$. 
\end{lemma}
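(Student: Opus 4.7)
Set $\mu := \lambda-\rho$ and define $f\colon \Phi_P^+ \to \Z$ by $f(\beta) := \langle\mu,\beta^\vee\rangle$. The $P$-regularity of $\lambda$ translates to $\langle\mu,\alpha^\vee\rangle = -1$ for $\alpha \in I$ and $\langle\mu,\alpha^\vee\rangle = \langle\lambda,\alpha^\vee\rangle - 1 \geq 0$ for $\alpha \in S\setminus I$. My plan is to split into two cases along this dichotomy.

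The easy case: if there exists $\alpha \in S\setminus I$ with $\langle\lambda,\alpha^\vee\rangle = 1$, then $\tilde\beta := \alpha$ lies in $\Phi_P^+$ and $f(\tilde\beta) = 0$. So we may assume $\langle\lambda,\alpha^\vee\rangle \geq 2$ for every $\alpha \in S\setminus I$, i.e. $f(\alpha) \geq 1$ for every simple root in $\Phi_P^+$, and we need to produce a (necessarily non-simple) $\tilde\beta \in \Phi_P^+$ with $f(\tilde\beta) = 0$.

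The main plan is a discrete intermediate value argument. Fix a $\beta \in \Phi_P^+$ with $f(\beta) < 0$ and construct a chain $\beta = \beta_0, \beta_1, \dotsc, \beta_k \in \Phi_P^+$ in which consecutive roots differ by a single simple root and $\beta_k$ is a simple root in $S\setminus I$. Such a chain can be built inductively via the following reduction: any non-simple $\gamma \in \Phi_P^+$ admits a simple root $\alpha$ with $\gamma - \alpha \in \Phi_P^+$; whenever possible we subtract an $\alpha \in I$ (which trivially preserves the nontrivial $(S\setminus I)$-support of $\gamma$), and otherwise we subtract an $\alpha \in S\setminus I$ while checking that $\gamma - \alpha$ still has nontrivial $(S\setminus I)$-support. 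Along this chain $f$ must transition from negative to non-negative at some step $\beta_j \to \beta_{j+1}$; if it lands exactly at $0$, we are done.

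The main obstacle is the possibility that $f$ \emph{skips} zero at such a step, i.e. $f(\beta_j) < 0 < f(\beta_{j+1})$. The size of the skip is controlled by $\beta_{j+1}^\vee - \beta_j^\vee$. In simply laced types this equals $\pm \alpha^{(j)\vee}$, so the jump has magnitude $|\langle\mu,\alpha^{(j)\vee}\rangle| \in \{1\}\cup\Z_{\geq 0}$ and a skip over $0$ is essentially forbidden. In the non-simply-laced types $\Bn, \Cn, \FF, \GG$ coroot formation is not additive and the jump can be a small integer multiple (up to $3$) of a simple coroot. To handle this, I would zoom in on the full $\alpha^{(j)}$-string through $\beta_j$ inside $\Phi_P^+$, which by the general classification of root strings has length at most four and a short explicit list of coroot differences between consecutive entries. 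Combining this list with the bounds $\langle\mu,\alpha^\vee\rangle = -1$ for $\alpha \in I$ and $\langle\mu,\alpha^\vee\rangle \geq 1$ for $\alpha \in S\setminus I$, one checks in each case that some intermediate root on the string --- which still lies in $\Phi_P^+$ under our hypotheses --- realizes $f = 0$. This case-by-case verification in the non-simply-laced types is the most technical part of the argument; the remainder of the lemma is essentially a bookkeeping exercise in root combinatorics.
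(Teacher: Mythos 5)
Your overall architecture is the same as the paper's: build a chain in $\Phi_P^+$ whose consecutive members differ by a single simple root, run a discrete intermediate-value argument on $f(\gamma)=\langle\lambda-\rho,\gamma^\vee\rangle$, observe that in simply-laced types the crossing cannot skip zero, and treat the non-simply-laced skips separately. Two remarks on the set-up before the main objection: the reduction you assert, namely that every non-simple $\gamma\in\Phi_P^+$ admits a simple root $\alpha$ with $\gamma-\alpha\in\Phi_P^+$, is not automatic and is exactly what the paper proves in \cref{lemma:roots} and \cref{lemma:seq}; the delicate case is $\gamma=\alpha+\sum_{\alpha'\in I}m_{\alpha'}\alpha'$ with $\alpha\in S\setminus I$ appearing with coefficient $1$, where one must show that some \emph{other} simple root can be subtracted. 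This gap is repairable.

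The serious problem is your treatment of the skip case: examining the $\alpha^{(j)}$-string through $\beta_j$ cannot work, because whenever a genuine skip occurs that string consists of nothing but the two roots $\beta_j,\beta_{j+1}$ you already have. Indeed, a skip $f(\beta_j)\leq -1$, $f(\beta_{j+1})\geq 1$ with $\beta_j=\beta_{j+1}+\alpha^{(j)}$ forces (as in the paper's proof) $\alpha^{(j)}\in I$ and $\langle\alpha^{(j)},\alpha^{(j)}\rangle>\langle\beta_{j+1},\beta_{j+1}\rangle$, so $\alpha^{(j)}$ is a long root; but a root string in the direction of a root that is at least as long as every root it meets has length at most two, so there are no ``intermediate roots'' to inspect. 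A concrete counterexample to your step: take $G$ of type $\mathsf{C}_2$ with $\alpha_1$ short, $I=\{\alpha_2\}$, $\lambda=2\omega_1$. Then $\Phi_P^+=\{\alpha_1,\ \alpha_1+\alpha_2,\ 2\alpha_1+\alpha_2\}$ and
\[ \langle\lambda-\rho,\alpha_1^\vee\rangle=1,\qquad \langle\lambda-\rho,(\alpha_1+\alpha_2)^\vee\rangle=-1,\qquad \langle\lambda-\rho,(2\alpha_1+\alpha_2)^\vee\rangle=0. \]
Your chain is forced to be $\alpha_1+\alpha_2\to\alpha_1$ (subtracting $\alpha_2\in I$), the $\alpha_2$-string through $\alpha_1+\alpha_2$ is exactly $\{\alpha_1,\alpha_1+\alpha_2\}$ with $f$-values $\{1,-1\}$, and the unique root with $f=0$ is $2\alpha_1+\alpha_2$, which lies on a string in the direction of the \emph{chain root} $\beta_{j+1}=\alpha_1$, not of the simple root $\alpha^{(j)}=\alpha_2$. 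This is precisely how the paper resolves the skip: after pinning down $f(\beta_{j+1})\in\{1,2\}$ it produces $\tilde\beta=\alpha^{(j)}+\tfrac{\langle\alpha^{(j)},\alpha^{(j)}\rangle}{\langle\beta_{j+1},\beta_{j+1}\rangle}\beta_{j+1}$ (and $\tilde\beta=2\alpha^{(j)}+3\beta_{j+1}$ in the one remaining $\mathsf{G}_2$ configuration), verifying by a root-string argument in the $\beta_{j+1}$-direction that this is a root, necessarily in $\Phi_P^+$, with $f(\tilde\beta)=0$. Without replacing your string by this construction (or something equivalent), the non-simply-laced case of your argument does not go through.
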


To prove the lemma we need the following two lemmata.

\begin{lemma}\label{lemma:roots}
	Let $\beta = \sum_{i=1}^{r}m_i\alpha_i \in \Phi^+$ and $\height \beta > 1$. For every $i \in \lbrace 1 , \ldots , r \rbrace$ such that $m_i = 1$ there exists $j \neq i$ such that $\beta-\alpha_j \in \Phi$.
\end{lemma}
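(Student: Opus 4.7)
The lemma is a purely combinatorial statement about root systems. My strategy is a case analysis on whether $\beta - \alpha_j$ is a root for some $j \neq i$, using $\alpha_j$-string analysis to rule out the bad case. Since $\beta$ has height $h \geq 2$, a standard fact guarantees at least one simple root $\alpha_k$ with $\beta - \alpha_k \in \Phi^+$. If some such $k$ differs from $i$, we are done with $j = k$. The difficult case is when the only such $k$ is $i$ itself, equivalently $\beta - \alpha_j \notin \Phi$ for every $j \neq i$. I intend to reach a contradiction from this.

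Under the contradiction hypothesis, for each $j \neq i$ the $\alpha_j$-string through $\beta$ forces $\langle \beta, \alpha_j^\vee \rangle \leq 0$. Combined with $(\beta, \beta) > 0$ and the expansion $\beta = \alpha_i + \sum_{j \neq i} m_j \alpha_j$, this yields $(\beta, \alpha_i) \geq (\beta, \beta) > 0$, so $\langle \beta, \alpha_i^\vee \rangle \geq 1$; since $\beta - 2\alpha_i$ would have $m_i = -1$ with other coefficients nonnegative and hence cannot be a root, the $\alpha_i$-string pins $\langle \beta, \alpha_i^\vee \rangle$ to exactly $1$. Setting $\gamma := \beta - \alpha_i \in \Phi^+$, we get a positive root of height $h - 1$ not involving $\alpha_i$. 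If $h = 2$, then $\gamma = \alpha_j$ is simple with $j \neq i$, and $\beta - \alpha_j = \alpha_i \in \Phi$, contradicting the hypothesis. If $h \geq 3$, the basic fact applied to $\gamma$ produces some $j' \neq i$ with $\mu := \gamma - \alpha_{j'} \in \Phi^+$, and the computation $\langle \mu, \alpha_i^\vee \rangle = \langle \beta, \alpha_i^\vee \rangle - 2 - \langle \alpha_{j'}, \alpha_i^\vee \rangle = -1 - \langle \alpha_{j'}, \alpha_i^\vee \rangle$ shows that $\mu + \alpha_i = \beta - \alpha_{j'}$ is a root whenever $\alpha_{j'}$ is not a Dynkin neighbor of $\alpha_i$, yielding the required contradiction in that case.

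The main obstacle I anticipate is the residual case in which every simple root $\alpha_{j'}$ that chain-reduces $\gamma$ happens to be a Dynkin neighbor of $\alpha_i$. To handle this I would iterate the reduction one level deeper, applying the basic fact to $\gamma - \alpha_{j'}$ and tracking the accumulating constraints $\langle \alpha_?, \alpha_i^\vee \rangle \leq -1$ along a descending root chain, until the finiteness of the Dynkin diagram either forces a contradiction or produces the required $j$ directly. An alternative, slicker route would be to invoke the irreducibility of the adjoint module $\bigoplus_{\beta' \in \Phi^+,\, m_i(\beta') = 1} \g_{\beta'}$ over the Levi of $P_{\{i\}}$, in which $\alpha_i$ is the unique lowest weight; any non-lowest weight $\beta$ must then admit a lowering by some simple root $\alpha_j$ of the Levi (so $j \neq i$), giving $\beta - \alpha_j \in \Phi^+$ immediately. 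This bookkeeping step is the delicate part; everything else is routine root string analysis.
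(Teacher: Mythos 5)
Your main line of attack is correct up to the point where it matters most, and then it stalls exactly where the paper's proof does its real work. The reduction to the case $\langle\beta,\alpha_i^\vee\rangle=1$, the case $h=2$, and the root-string computation showing $\beta-\alpha_{j'}\in\Phi$ whenever $\gamma:=\beta-\alpha_i$ can be lowered by a simple root $\alpha_{j'}$ \emph{not} adjacent to $\alpha_i$ are all fine. But the residual case --- every simple root that lowers $\gamma$ is a Dynkin neighbour of $\alpha_i$ --- is precisely the crux, and ``iterate one level deeper and let finiteness of the Dynkin diagram force a contradiction'' is not an argument: the weak fact you invoke (``some simple root lowers $\gamma$'') gives you no control over \emph{which} simple root you get at each stage, and the sign constraints $\langle\cdot,\alpha_i^\vee\rangle$ you accumulate do not obviously close up. The paper avoids this by proving the lemma by induction on $\height\beta$ in its full strength (for \emph{every} index with coefficient $1$): in the hard case it observes that $\operatorname{supp}(\beta-\alpha_i)$ is connected, contains a \emph{unique} neighbour $\alpha_j$ of $\alpha_i$, and that $m_j=1$; the inductive hypothesis applied to $\beta-\alpha_i$ \emph{at the index $j$} then yields a lowering $\alpha_k$ with $k\neq j$, hence $\alpha_k$ orthogonal to $\alpha_i$, and your own non-neighbour computation finishes. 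So the fix for your approach is exactly this strengthening of the induction; without it there is a genuine gap.

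Your alternative route is sound but of a different character. For the maximal parabolic omitting $\alpha_i$, the degree-one piece $\g_1=\bigoplus_{m_i(\beta')=1}\g_{\beta'}$ of the induced grading (equivalently $\mathfrak{n}/[\mathfrak{n},\mathfrak{n}]$) is indeed an irreducible module for the Levi, $e_{\alpha_i}$ is a lowest-weight vector, and the standard fact that any non-lowest weight of a finite-dimensional irreducible module can be lowered by a simple root of the acting algebra gives the lemma immediately. Be aware, however, that the irreducibility of $\g_1$ is essentially equivalent to the statement you are proving (it says exactly that $e_{\alpha_i}$ is the only lowest-weight vector in degree one), so this route amounts to citing a Kostant-type theorem rather than proving anything; the paper's inductive argument buys a short, self-contained, purely root-theoretic proof, while your citation buys brevity at the cost of importing a result at least as strong as the lemma. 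If you go that way, state the irreducibility theorem precisely and give a reference; if you keep the root-string route, you must carry out the strengthened induction rather than the sketched iteration.
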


\begin{proof}
	We will prove the lemma by induction on $\height \beta$.
	
	For $\height \beta = 2$ we have nothing to prove since $\beta = \alpha_i + \alpha_j$ for some $i,j\in \lbrace 1 , \ldots, r \rbrace$.
	
	Now suppose that $\height\beta > 2$. Fix an $i \in \lbrace 1,\ldots,r\rbrace$ such that $m_i = 1$. If $\langle\beta,\alpha_i^\vee\rangle \leq 0$, we again have nothing to prove, because the proof of \cite[Lemma A of 10.2]{Hu} ensures that there exists at least one $j \in \lbrace 1, \ldots, r\rbrace$ such that $\langle\beta,\alpha_j^\vee\rangle >0$ which cannot be equal to $i$ by assumption. By \cite[Lemma 9.4]{Hu} this $j$ would then possess the desired property.
	
	So we only have to prove the case where $\langle\beta,\alpha_i^\vee\rangle > 0$. Because of \cite[Lemma 9.4]{Hu} this means that $\beta - \alpha_i$ is a (necessarily positive) root.
	
	Hence we know that the support of $\beta-\alpha_i$ is connected in the Dynkin diagram of $\g$. But because $m_i = 1$ we know that this support does not contain $\alpha_i$. This means that there exists only one simple root in the support of $\beta$ that is adjacent to $\alpha_i$, because otherwise the removal of $\alpha_i$ would result in a disconnected subgraph. Denote this adjacent simple root by $\alpha_j$. So for every $k \in \lbrace 1 , \ldots, r \rbrace \setminus \lbrace i,j\rbrace$ with $m_k > 0$ we have $\langle\alpha_k, \alpha_i^\vee\rangle = 0$. From $\langle\alpha_j,\alpha_i^\vee\rangle \leq -1$ and
	\[ 0 < \langle\beta,\alpha_i^\vee\rangle = m_i\langle\alpha_i,\alpha_i^\vee\rangle + m_j\langle\alpha_j,\alpha_i^\vee\rangle \leq 2 - m_j\]
	we conclude that $m_j < 2$ and thus $m_j = 1$. So we can use the induction hypothesis on $\beta-\alpha_i$ and get a $k \neq j$ such that $\beta-\alpha_i-\alpha_k$ is a root. Because $\beta-\alpha_i$ does not contain $\alpha_i$ in its support, we know that $k \neq i$. Thus we conclude
	\[ \langle \beta-\alpha_i-\alpha_k,\alpha_i^\vee \rangle = m_j \langle\alpha_j,\alpha_i^\vee\rangle - \langle \alpha_k, \alpha_i^\vee \rangle \leq -m_j - 0 = -1 < 0 \]
	and \cite[Lemma 9.4]{Hu} shows that $\beta - \alpha_k = \beta - \alpha_i - \alpha_k + \alpha_i$ is a (positive) root.
\end{proof}

\begin{lemma}\label{lemma:seq}
	Let $\beta \in \Phi_P^+$. There exists a sequence $(i_j)_{j \in \lbrace 1, \ldots, \height\beta\rbrace}$ in $\lbrace 1, \ldots, r\rbrace$ such that $\beta = \sum_{j = 1}^{\height \beta}\alpha_{i_j}$ and $\sum_{j = 1}^{k}\alpha_{i_j} \in \Phi_P^+$ for every $k \in \lbrace 1, \ldots, \height\beta\rbrace$.  
\end{lemma}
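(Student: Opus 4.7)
The plan is to prove the lemma by induction on $\height\beta$, and the entire content of the argument lies in producing, for any $\beta\in\Phi_P^+$ with $\height\beta\geq 2$, a single simple root $\alpha_i$ with $\beta-\alpha_i\in\Phi_P^+$. Once this is available, the inductive step is immediate: apply the hypothesis to $\beta-\alpha_i$ to get a sequence $(i_1,\ldots,i_{\height\beta-1})$ whose partial sums all lie in $\Phi_P^+$ and then append $i_{\height\beta}:=i$, noting that the final partial sum is $\beta$ itself. The base case $\height\beta=1$ is free: $\beta=\alpha_k$ must be simple, and since $\beta\in\Phi_P^+=\Phi^+\setminus\langle I\rangle^+$ we have $\alpha_k\in S\setminus I$, so the one-term sequence $(k)$ works.

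For the key claim, write $\beta=\sum_{\ell=1}^r m_\ell\alpha_\ell$ and let $J:=\{\ell : \alpha_\ell\in S\setminus I,\ m_\ell>0\}$; since $\beta\in\Phi_P^+$ we have $J\neq\emptyset$. I will split into two cases. In the \emph{easy case}, either $|J|\geq 2$ or there exists $k\in J$ with $m_k\geq 2$; here I pick any $i$ with $\langle\beta,\alpha_i^\vee\rangle>0$ (such $i$ exists by the standard argument, e.g.\ \cite[Lemma A of 10.2]{Hu} combined with \cite[Lemma 9.4]{Hu}), so that $\beta-\alpha_i\in\Phi^+$, and I verify by inspecting $J$ that subtracting $\alpha_i$ cannot remove every simple root of $S\setminus I$ from the support: if $i\in I$ then $J$ is unaffected, if $i\in J$ with $m_i\geq 2$ then $i$ remains in the new version of $J$, and if $i\in J$ with $m_i=1$ then there is still some other index in $J$. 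The \emph{hard case} is when $J=\{k\}$ and $m_k=1$, i.e. $\alpha_k$ is the \emph{only} simple root from $S\setminus I$ in the support of $\beta$ and it appears with multiplicity one. Here the naive choice might force $i=k$ and destroy membership in $\Phi_P^+$.

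This hard case is exactly what Lemma \ref{lemma:roots} is designed to handle: applied to the index $k$ (with $m_k=1$ and $\height\beta>1$), it produces $j\neq k$ such that $\beta-\alpha_j\in\Phi$. Since the $\alpha_k$-coefficient of $\beta-\alpha_j$ is still $m_k=1>0$, this difference has at least one positive coefficient, hence lies in $\Phi^+$, and moreover still has the positive $\alpha_k$-coefficient with $\alpha_k\in S\setminus I$, so $\beta-\alpha_j\in\Phi_P^+$, as required. Taking $i:=j$ completes the case analysis and hence the inductive step. The only place the argument is non-routine is precisely this invocation of Lemma \ref{lemma:roots}; the rest is bookkeeping about how the support intersects $S\setminus I$, so I expect no further obstacles.
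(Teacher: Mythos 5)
Your proof is correct and follows essentially the same route as the paper: induction on $\height\beta$, where the only delicate point is producing a simple root whose subtraction stays in $\Phi_P^+$, with \cref{lemma:roots} invoked exactly in the critical case where the support of $\beta$ meets $S\setminus I$ in a single simple root of coefficient one. Your explicit case split on the set $J$ is just a slightly more detailed bookkeeping of the paper's observation that the only failure mode is $\beta = \alpha + \sum_{\alpha'\in I} m_{\alpha'}\alpha'$.
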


\begin{proof}
	We will prove the lemma by induction on $\height \beta$.
	
	If $\height \beta = 1$ there is nothing to prove.
	
	So let $h \in \N, h > 1,$ and suppose the lemma is true for every positive root $\beta' \in \Phi_P^+$ with $\height\beta' < h$. Let us now assume  $\beta \in \Phi_P^+$ with $\height\beta = h$. If no such $\beta$ exists we have nothing to prove.
	
	We know that there exists $\alpha \in S$ such that $\beta - \alpha \in \Phi^+$. If $\beta-\alpha \notin \Phi_P^+$ then $\beta$ must be of the form $\beta = \alpha + \sum_{\alpha' \in I} m_{\alpha'}\alpha'$. In this case \cref{lemma:roots} assures us that there exists another $\alpha' \in S$ such that $\beta - \alpha' \in \Phi$ and furthermore this root has to be in $\Phi_P^+$.
	
	So we can always find $\alpha \in S$ such that $\beta - \alpha \in \Phi_P^+$. By applying the induction hypothesis on that root we find the correct sequence $(i_j)_{j \in \lbrace 1, \ldots, h-1\rbrace}$ in $\lbrace 1, \ldots, r\rbrace$ for $\beta - \alpha$. Defining $i_h$ by $\alpha_{i_h} = \alpha$ will yield the desired sequence for $\beta$.
\end{proof}

We can now prove our last lemma and finish our preparations.

\begin{proof}[Proof of \cref{lemma:nonneg}]
	Let $\beta \in \Phi_P^+$ such that $\langle \lambda - \rho, \beta^\vee\rangle < 0$. Let $h := \height\beta$.
	
	Notice that $h > 1$ since for every simple root $\alpha \in \Phi_P^+$, i.e. $\alpha \in S\setminus I$, we have $\langle \lambda - \rho, \alpha^\vee\rangle \geq 0$ because $\lambda$ is $P$-regular.
	
	By \cref{lemma:seq} we find a sequence $(i_j)_{j \in \lbrace 1, \ldots, h\rbrace}$ in $\lbrace 1, \ldots, r\rbrace$ such that $\beta = \sum_{j = 1}^{h}\alpha_{i_j}$ and $\beta_k := \sum_{j = 1}^{k}\alpha_{i_j} \in \Phi_P^+$ for every $k \in \lbrace 1, \ldots, h\rbrace$. 
	
	Since $\langle \lambda - \rho, \alpha_{i_1}^\vee\rangle \geq 0$ there exists an index $k \in \lbrace 1, \ldots, h \rbrace$ such that $\langle \lambda - \rho, \beta_{k-1}^\vee\rangle \geq 0$ and $\langle \lambda - \rho, \beta_k^\vee\rangle < 0$. We have
	\begin{align*}
		0 \leq \langle \lambda - \rho, \beta_{k-1}^\vee\rangle = 2 \cdot \frac{\langle \lambda - \rho, \beta_k\rangle - \langle \lambda - \rho, \alpha_{i_k}\rangle}{\langle \beta_{k-1}, \beta_{k-1}\rangle} < - \frac{\langle \alpha_{i_k}, \alpha_{i_k} \rangle}{\langle \beta_{k-1}, \beta_{k-1} \rangle} \langle \lambda - \rho, \alpha_{i_k}^\vee \rangle.
	\end{align*}
	Since $\lambda$ is $P$-regular this is only possible if $\alpha_{i_k} \in I$, i.e. $\langle\lambda,\alpha_{i_k}^\vee\rangle = 0$, and thus  
	\[ 0 \leq \langle \lambda - \rho, \beta_{k-1}^\vee\rangle < \frac{\langle \alpha_{i_k}, \alpha_{i_k}\rangle}{\langle \beta_{k-1}, \beta_{k-1} \rangle}.\]
	This shows that there are only three possible values for $\langle\lambda-\rho, \beta_{k-1}^\vee\rangle$, since the fraction on the right side must be an element of $\lbrace \frac{1}{3}, \frac{1}{2}, 1, 2, 3\rbrace$.
	
	If $\langle\lambda-\rho, \beta_{k-1}^\vee\rangle = 0$ we have found the desired root $\tilde\beta = \beta_{k-1}$.
	
	If $\langle\lambda-\rho, \beta_{k-1}^\vee\rangle = 1$ we must have $\frac{\langle\alpha_{i_k}, \alpha_{i_k}\rangle}{\langle\beta_{k-1}, \beta_{k-1} \rangle} \in \lbrace 2,3\rbrace$. Set \[\tilde\beta := \alpha_{i_k} + \frac{\langle\alpha_{i_k}, \alpha_{i_k}\rangle}{\langle\beta_{k-1}, \beta_{k-1} \rangle} \beta_{k-1}\] as an element of the root lattice. We have 
	\begin{align*} \langle\lambda-\rho, \tilde{\beta}\rangle &= \langle\lambda-\rho, \alpha_{i_k}\rangle + \frac{\langle\alpha_{i_k}, \alpha_{i_k}\rangle}{\langle\beta_{k-1}, \beta_{k-1}\rangle} \langle\lambda-\rho, \beta_{k-1}\rangle\\
		&= - \frac{\langle\alpha_{i_k}, \alpha_{i_k}\rangle}{2}  + \frac{\langle\alpha_{i_k}, \alpha_{i_k}\rangle}{\langle\beta_{k-1}, \beta_{k-1}\rangle} \cdot \frac{\langle \beta_{k-1}, \beta_{k-1}\rangle}{2} = 0.
	\end{align*}
	We still have to show that $\tilde{\beta}$ is actually a root. By expanding $\langle \beta_k - \alpha_{i_k}, \beta_k-\alpha_{i_k} \rangle$ we find that
	\[\langle \beta_k, \alpha_{i_k}^\vee \rangle = 1 + \frac{\langle \beta_k, \beta_k\rangle}{\langle \alpha_{i_k}, \alpha_{i_k}\rangle} - \frac{\langle \beta_{k-1}, \beta_{k-1}\rangle}{\langle \alpha_{i_k}, \alpha_{i_k}\rangle}.\]
	Since the last summand is not an integer, we know that the second summand must not be an integer, too. But this means that $\beta_k$ and $\beta_{k-1}$ must have the same length because only two root lengths are allowed to occur in any irreducible root system (\cite[Lemma C of 10.4]{Hu}). We conclude that $\langle\beta_k, \alpha_{i_k}^\vee \rangle = 1$ and thus $\langle\beta_{k-1}, \alpha_{i_k}^\vee \rangle = -1$. This yields 
	\[ \langle\alpha_{i_k}, \beta_{k-1}^\vee\rangle = \frac{\langle\alpha_{i_k}, \alpha_{i_k}\rangle}{\langle\beta_{k-1}, \beta_{k-1}\rangle} \langle \beta_{k-1}, \alpha_{i_k}^\vee\rangle = -\frac{\langle\alpha_{i_k}, \alpha_{i_k}\rangle}{\langle\beta_{k-1}, \beta_{k-1}\rangle},  \]
	which implies that $\tilde{\beta}$ is a root using basic considerations on root strings (\cite[9.4]{Hu}). 
	
	The last possible case $\langle\lambda-\rho, \beta_{k-1}^\vee\rangle = 2$ can only occur if the root system is $\GG$, $\alpha_{i_k}$ is the long simple root and $\beta_{k-1}$ is a short positive root. Since their sum must again be a root, we know that $\beta_{k-1}$ has to be the short simple root. In that case we set $\tilde{\beta} = 2 \alpha_{i_k} + 3 \beta_{k-1} \in \Phi_P^+$ and calculate \begin{align*} \langle\lambda-\rho, \tilde{\beta}\rangle &= \frac{3}{2}\langle\beta_{k-1}, \beta_{k-1}\rangle \langle \lambda-\rho, \beta_{k-1}^\vee\rangle + \langle\alpha_{i_k}, \alpha_{i_k}\rangle\langle\lambda-\rho, \alpha_{i_k}^\vee\rangle\\ &= 3\langle\beta_{k-1},\beta_{k-1}\rangle - \langle\alpha_{i_k}, \alpha_{i_k}\rangle = 0,\end{align*}
	which concludes the proof.
\end{proof}

\section{Proof of the Main Theorem}\label{sec:proof}

We are now able to state the proof of our Main Theorem.

\begin{proof}[Proof of the Main Theorem]
	\cref{prop:no} tells us that $\Delta(\lambda) \subseteq \Re^{N_P}$ is a full-dimensional rational polytope and from \cref{lemma:ehrhart} we know that \[L_{\Delta(\lambda)}(n) = \prod_{\beta \in \Phi_P^+}\frac{\langle n\lambda + \rho, \beta^\vee\rangle}{\langle \rho, \beta^\vee \rangle}\]
	for all $n \in \Z$.
	
	Now suppose that $\Delta(\lambda)$ contains one unique lattice point in its interior. By Ehrhart-Macdonald Reciprocity in \cref{thm:emr} we have 
	\[1 = L_{\interior\Delta(\lambda)}(1) = (-1)^{N_P} L_{\Delta(\lambda)}(-1) = \prod_{\beta \in \Phi_P^+}\frac{\langle \lambda - \rho, \beta^\vee\rangle}{\langle \rho, \beta^\vee \rangle}. \]
	This implies that $\langle \lambda-\rho,\beta^\vee\rangle \neq 0$ for every $\beta \in \Phi_P^+$ and by \cref{lemma:nonneg} this actually means that $\langle \lambda-\rho,\beta^\vee\rangle > 0$ for all $\beta \in \Phi_P^+$. From \cref{lemma:w_I} we know that the longest word $w_I \in \W_I \subseteq \W$ permutes the elements of $\Phi_P^+$. Since it is a reflection, it leaves the scalar product invariant and by reshuffling factors we have
	\[1 =\prod_{\beta \in \Phi_P^+}\frac{\langle \lambda - \rho, \beta^\vee\rangle}{\langle \rho, \beta^\vee \rangle}=\prod_{\beta \in \Phi_P^+}\frac{\langle \lambda - \rho, (w_I\beta)^\vee\rangle}{\langle \rho, \beta^\vee \rangle} = \prod_{\beta \in \Phi_P^+}\frac{\langle w_I(\lambda - \rho), \beta^\vee\rangle}{\langle \rho, \beta^\vee \rangle}. \]
	Consider the integral weight $\mu =  \sum_{i=1}^{r} \mu_i\omega_i := w_I(\lambda-\rho)$. Every coefficient $\mu_i$ is strictly positive since $\langle \lambda-\rho,(w_I\beta)^\vee \rangle >0$ for every $\beta \in \Phi_P^+$\,---\,especially for every $\alpha \in S \setminus I$\,---\,and $\langle\lambda-\rho,(w_I\alpha)^\vee\rangle = -\langle\rho, (w_I\alpha)^\vee\rangle > 0$ for every $\alpha \in I$ because $w_I(\alpha) \in -\langle I \rangle^+$ by \cref{lemma:w_I}.
	
	This observation allows us to use the weighted inequality of arithmetic and geometric means to calculate
	\begin{align*}
	1 &= \prod_{\beta \in \Phi_P^+}\frac{\langle\mu, \beta^\vee\rangle}{\langle \rho, \beta^\vee \rangle} = \prod_{\beta \in \Phi_P^+}\frac{\sum_{i=1}^{r}\langle \omega_i, \beta^\vee\rangle \mu_i}{\langle \rho, \beta^\vee \rangle} \\
	& \geq \prod_{\beta \in \Phi_P^+} \left( \prod_{i=1}^{r} \mu_i^{\langle \omega_i,\beta^\vee\rangle} \right)^{\frac{1}{\langle \rho,\beta^\vee\rangle}} = \prod_{i=1}^{r}\left( \mu_i^{\sum_{\beta\in \Phi_P^+} \frac{\langle \omega_i,\beta^\vee\rangle}{\langle\rho,\beta^\vee\rangle}}\right).
	\end{align*}
	Since $\langle \omega_i,\beta^\vee\rangle \geq0$ for all $\beta \in \Phi_P^+$ with strict inequality at least once for every $i \in \lbrace 1, \ldots, r\rbrace$, we have strictly positive coefficients $a_1, \ldots, a_r \in \Re_{>0}$ such that 
	\[1 \geq \mu_1^{a_1} \cdots \mu_r^{a_r}.\]
	Since all of the $\mu_i$ are strictly positive integers, this inequality can only hold if $\mu_i = 1$ for all $i \in \lbrace1,\ldots,r\rbrace$ and then it is in fact an equality. But this means that $w_I(\lambda-\rho) = \mu = \sum_{i=1}^{r}\omega_i = \rho$ and thus $\lambda = \rho + w_I(\rho)$. By \cref{lemma:acweight} this is the weight of the anticanonical line bundle over $G/P$, which proves the first direction.
	
	In fact, we also proved the other direction on the way because we noticed that $\mu = \rho$ if $\lambda$ is the weight of the anticanonical bundle, which yields $L_{\interior \Delta(\lambda)}(1) = \prod_{\beta \in \Phi_P^+}\frac{\langle\mu,\beta^\vee\rangle}{\langle\rho,\beta^\vee\rangle} = 1 $ if we apply the above calculations in opposite order.
	
	So what is left to prove is the final implication of the theorem. Let $\lambda = \rho + w_I(\rho)$ be the weight of the anticanonical line bundle over $G/P$. We calculate
	\begin{align*}
		(-1)^{N_P} L_{\Delta(\lambda)}(-n-1) &= \prod_{\beta \in \Phi_P^+}\frac{\langle (n+1)\lambda - \rho, \beta^\vee\rangle}{\langle \rho, \beta^\vee \rangle}\\
		&= \prod_{\beta \in \Phi_P^+}\frac{\langle n\rho + \rho + nw_I(\rho) + w_I(\rho) - \rho, \beta^\vee\rangle}{\langle \rho, \beta^\vee \rangle}\\
		&= \prod_{\beta \in \Phi_P^+}\frac{\langle n(\rho + w_I(\rho)) + w_I(\rho), (w_I\beta)^\vee\rangle}{\langle \rho, \beta^\vee \rangle}\\
		&= \prod_{\beta \in \Phi_P^+}\frac{\langle n(w_I(\rho) + \rho) + \rho, \beta^\vee\rangle}{\langle \rho, \beta^\vee \rangle}\\
		&= \prod_{\beta \in \Phi_P^+}\frac{\langle n\lambda + \rho, \beta^\vee\rangle}{\langle \rho, \beta^\vee \rangle} = L_{\Delta(\lambda)}(n)
	\end{align*}
	for all $n \in \N$. It is clear that the Ehrhart polynomial of a polytope is invariant under translation of the polytope via a lattice vector. Hence Hibi's \cref{thm:hibi} concludes the proof.
\end{proof}

\section{Applications}\label{sec:more}

We have the following two immediate corollaries to our Main Theorem under the assumptions of the Main Theorem.

\begin{cor}\label{cor:refl}
	The Newton-Okounkov body $\Delta(\lambda)$ is a reflexive polytope (after translation by a lattice vector) if and only if it is a lattice polytope and $\lambda$ is the weight of the anticanonical bundle over $G/P$.
\end{cor}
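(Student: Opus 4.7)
The plan is to derive this corollary directly from the Main Theorem by carefully unpacking what reflexivity means. Recall that a lattice polytope $\PP$ is called \emph{reflexive} precisely when its polar dual $\PP^*$ is again a lattice polytope; equivalently, $\PP$ is a lattice polytope containing the origin as its unique interior lattice point and whose polar dual is a lattice polytope. So the task reduces to matching up these three conditions (being a lattice polytope, having the origin as the unique interior lattice point, and having a lattice polar dual) with the conclusions of the Main Theorem.

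For the backward implication, assume $\Delta(\lambda)$ is a lattice polytope and $\lambda$ is the anticanonical weight. By the Main Theorem the interior of $\Delta(\lambda)$ contains a unique lattice point $p_\lambda$, which is in particular a lattice vector. The translate $\Delta(\lambda) - p_\lambda$ is then itself a lattice polytope, it contains the origin as its unique interior lattice point, and by the second assertion of the Main Theorem its polar dual $(\Delta(\lambda) - p_\lambda)^*$ is a lattice polytope. By definition this makes $\Delta(\lambda) - p_\lambda$ reflexive.

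For the forward implication, suppose that there is a lattice vector $v$ such that $\Delta(\lambda) - v$ is reflexive. Then $\Delta(\lambda) - v$ is a lattice polytope, so $\Delta(\lambda)$ is a lattice polytope as well (translation by a lattice vector preserves the lattice polytope property). Moreover, $\Delta(\lambda) - v$ is reflexive and hence contains the origin as its unique interior lattice point, which means $\Delta(\lambda)$ contains $v$ as its unique interior lattice point. Applying direction $(ii) \Rightarrow (i)$ of the Main Theorem, $\L_\lambda$ is the anticanonical line bundle, i.e.\ $\lambda$ is the weight of the anticanonical bundle.

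There is essentially no obstacle here: the Main Theorem does all the heavy lifting, and the corollary is a bookkeeping exercise translating between "reflexive after a lattice translation" and the combination "lattice polytope plus unique interior lattice point plus lattice polar dual". The only point to be slightly careful with is to observe that in the backward direction, $p_\lambda$ is automatically a lattice vector (so that the translated polytope remains a lattice polytope), and that in the forward direction, $v$ being a lattice vector forces the unique interior point of $\Delta(\lambda)$ itself to be a lattice point — both of which are immediate.
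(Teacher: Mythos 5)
Your proposal is correct and matches the paper's treatment: the paper states this as an immediate corollary of the Main Theorem, and your argument is exactly the intended bookkeeping, translating ``reflexive after a lattice translation'' into the combination of lattice polytope, unique interior lattice point, and lattice polar dual, then applying both directions of the Main Theorem together with its statement that $\left(\Delta(\lambda)-p_\lambda\right)^*$ is a lattice polytope.
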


Let $GT(\lambda)$ denote the {\em Gelfand-Tsetlin polytope} as defined for type $\An$ in \cite{GT} and for type $\Cn$ in \cite{BZ1}. Let $FFLV(\lambda)$ denote the {\em Feigin-Fourier-Littelmann-Vinberg polytope} as defined for type $\An$ in \cite{FFL1} and for type $\Cn$ in \cite{FFL2}. Let $G(\lambda)$ denote the {\em Gornitskii polytope} as defined for type $\GG$ in \cite{G}.

\begin{cor}
	Let $G$ be of type $\An$ or $\Cn$, let $G/P$ be a flag variety and let $\lambda \in \Lambda^+_P$. Then $GT(\lambda)$ and $FFLV(\lambda)$ are reflexive (after translation by a lattice vector) if and only if $\lambda$ is the weight of the anticanonical bundle over $G/P$.
	
	Let $G$ be of type $\GG$, let $G/P$ be an arbitrary partial flag variety and let $\lambda \in \Lambda^+_P$. Then $G(\lambda)$ is reflexive (after translation by a lattice vector) if and only if $\lambda$ is the weight of the anticanonical bundle over $G/P$.
\end{cor}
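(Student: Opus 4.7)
My plan is to reduce this corollary directly to \cref{cor:refl}, which asserts that for any Newton-Okounkov body $\Delta(\lambda)$ satisfying the hypotheses of the Main Theorem, reflexivity (after translation by a lattice vector) is equivalent to being a lattice polytope \emph{and} having $\lambda$ equal to the weight of the anticanonical bundle. Thus I need two ingredients: first, that each of $GT(\lambda)$, $FFLV(\lambda)$, and $G(\lambda)$ is realized as some $\Delta(\lambda) = NO(G/P, \L_\lambda, v)$ for a full-rank valuation $v$ whose valuation semigroup $\Gamma(\lambda)$ is finitely generated and saturated; second, that each of these polytopes is a lattice polytope for every $\lambda \in \Lambda_P^+$, so the lattice-polytope hypothesis in \cref{cor:refl} is automatic and only the anticanonical condition is substantive.

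For the Newton-Okounkov realization I would invoke precisely the results collected in the introduction. In type $\An$, Kaveh realizes $GT(\lambda)$ as a Newton-Okounkov body via the valuation associated to a flag of translated Schubert subvarieties, and Kiritchenko establishes the analogue in type $\Cn$. Fujita-Naito supply the parallel realizations of $FFLV(\lambda)$ in both types via the PBW-type valuation of Feigin-Fourier-Littelmann, and the birational-sequence machinery of Fang-Fourier-Littelmann provides the corresponding realization of $G(\lambda)$ in type $\GG$. In each of these constructions the valuation semigroup is generated in degree one by the lattice points of the respective polytope, which simultaneously yields full rank, finite generation in degree one, and saturation. Lattice integrality of $GT(\lambda)$, $FFLV(\lambda)$, and $G(\lambda)$ in the listed types is classical and follows directly from the explicit combinatorial definitions, whose defining hyperplane arrangements have integral normals and produce integral vertices.

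With these two ingredients in place, the corollary is immediate: in each case $\Delta(\lambda)$ equals the polytope in question, is a lattice polytope, and satisfies the hypotheses of the Main Theorem, so \cref{cor:refl} yields reflexivity (after a lattice translation) if and only if $\lambda$ is the anticanonical weight. The main obstacle I anticipate is not conceptual but bookkeeping: different authors normalize these polytopes differently — in particular some Gelfand-Tsetlin patterns in type $\Cn$ are presented with half-integer entries — so one must first align each external construction with the lattice scaling for which the Newton-Okounkov realization holds before invoking \cref{cor:refl}. Once these conventions are matched the proof is essentially a compilation of citations.
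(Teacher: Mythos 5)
Your proposal is correct and follows exactly the route the paper intends: the paper gives no separate proof, treating the statement as immediate from \cref{cor:refl} once the cited realizations of $GT(\lambda)$, $FFLV(\lambda)$ and $G(\lambda)$ as Newton-Okounkov bodies with finitely generated, saturated valuation semigroups, together with their lattice-polytope property, are taken as known. Just be careful with the attributions (the FFLV realizations are due to Kiritchenko and to Feigin--Fourier--Littelmann's favourable modules rather than to Fujita--Naito) and note that saturation is a known property of these particular semigroups (e.g.\ via their Minkowski/normality properties), not a formal consequence of generation in degree one.
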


Finally we want to study one of the biggest classes of examples\,---\,namely the {\em string polytopes} $\Q_{\underline{w_0}}(\lambda)$ as defined in \cite{L} using notation from \cite{AB}. 
As a special case we have the following observation for the full flag variety in type $\An$ that has already been proved by Rusinko directly in \cite[Theorem 7]{R}.

\begin{cor}[Rusinko]
	Let $G=\SL_{n+1}$. Then the dual of the string polytope $\Q_{\underline{w_0}}(2\rho)$ is a lattice polytope (after translation) for every reduced decomposition $\underline{w_0}$.
\end{cor}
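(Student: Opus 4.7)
The plan is to deduce the statement as an essentially immediate consequence of the Main Theorem; no separate appeal to \cref{cor:refl} is needed, because the Main Theorem already concludes with the assertion that the polar dual of the translated Newton-Okounkov body is a lattice polytope, and this does \emph{not} require the Newton-Okounkov body itself to be integral.

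First I would pin down the weight. For $G=\SL_{n+1}$ acting on the full flag variety $G/B$ we have $P=B$, hence $I=\emptyset$, and consequently $\W_I$ is trivial and its longest element $w_I$ is the identity. \cref{lemma:acweight} then gives $\lambda_{G/B}=\rho+w_I(\rho)=2\rho$, so the weight $2\rho$ appearing in the statement is indeed the anticanonical weight of $G/B$. Second, for any reduced decomposition $\underline{w_0}=s_{i_1}\cdots s_{i_N}$ the string polytope $\Q_{\underline{w_0}}(\lambda)$ is known to be realized as a Newton-Okounkov body: there is a full-rank $\Z^N$-valuation on $R(G/B,\L_\lambda)$, built from a Bott--Samelson chart equipped with a lexicographic monomial order, whose associated Newton-Okounkov body is exactly $\Q_{\underline{w_0}}(\lambda)$ (Kaveh \cite{Ka}). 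In particular this holds for $\lambda=2\rho$.

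The key step, and the only real obstacle, is to check the hypothesis ``$\Gamma(2\rho)$ is finitely generated and saturated'' of the Main Theorem for every reduced decomposition $\underline{w_0}$ in type $\An$. Concretely, one needs to know that the filtered associated graded algebra of $R(G/B,\L_{2\rho})$ under the Kaveh valuation is the semigroup algebra of a finitely generated, normal (hence saturated) semigroup. In type $\An$ this is classically available through the toric degenerations of the flag variety to the toric variety associated with $\Q_{\underline{w_0}}$ constructed by Caldero and by Alexeev--Brion \cite{C,AB} (cf.\ also \cite{GL,KoM}), together with the normality of the resulting toric variety. Granting this input, the Main Theorem applies verbatim to $(G/B,\L_{2\rho},v)$, producing a lattice point $p_{2\rho}\in\interior\Q_{\underline{w_0}}(2\rho)$ such that the polar dual $(\Q_{\underline{w_0}}(2\rho)-p_{2\rho})^{*}$ is a lattice polytope. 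This is exactly the assertion of the corollary, recovering Rusinko's result \cite[Theorem 7]{R}.
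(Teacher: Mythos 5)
Your proposal is correct and follows exactly the route the paper intends: the paper states this corollary as an immediate consequence of the Main Theorem, relying on the facts you spell out\,---\,that $2\rho$ is the anticanonical weight of $G/B$ (since $w_I=e$ for $P=B$), that $\Q_{\underline{w_0}}(2\rho)$ is a Newton-Okounkov body for a full-rank valuation, and that the associated string semigroup is finitely generated and saturated in type $\An$. Your write-up merely makes explicit the citation-level hypotheses the paper leaves implicit, so there is nothing to correct.
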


Of course one would like to give a precise criterion when the string polytope of a partial flag variety is reflexive. But this is not solvable at the moment because it is not known when the string polytope is a lattice polytope, even for nice reduced decompositions and minuscule weights. We want to conclude our paper by illustrating this problem in the following three examples and stating a conjecture that would partially solve this problem.

All calculations in this section have been achieved using SageMath \cite{sage}. More detailed examples are available in my PhD thesis \cite[Section 6.3]{St}.

Our first example will answer a prominent question regarding string polytopes in type $\An$ by giving a counter-example to the following conjecture as formulated by Alexeev and Brion in \cite[Conjecture 5.8]{AB}.

\begin{conj}[Alexeev, Brion]
	For $G=\SL_{n+1}$ and any reduced decomposition $\underline{w_0}$, the string polytope $\Q_{\underline{w_0}}(\lambda)$ is a lattice polytope for every $\lambda \in \Lambda^+$.
\end{conj}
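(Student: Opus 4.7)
Since the statement to be established is a conjecture the author announces they will \emph{disprove}, the task reduces to exhibiting a single triple $(n,\underline{w_0},\lambda)$ with $G=\SL_{n+1}$, $\underline{w_0}$ a reduced decomposition of $w_0$, and $\lambda\in\Lambda^+$, such that the string polytope $\Q_{\underline{w_0}}(\lambda)$ has at least one vertex with a non-integer coordinate. The approach is purely computational, in the spirit of the SageMath \cite{sage} computations already used in this paper.

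First I would implement the defining inequalities of the string cone for an arbitrary reduced decomposition. The two natural options are Littelmann's recursive description from \cite{L} or the $i$-trail inequalities of Berenstein--Zelevinsky in \cite{BZ2}; either yields an explicit H-representation. Intersecting with the $\lambda$-dependent half-spaces and feeding the result into SageMath's Polyhedron class with exact rational arithmetic produces the vertex list of $\Q_{\underline{w_0}}(\lambda)$, whose coordinates can be inspected directly for nontrivial denominators.

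Next I would enumerate cases in order of increasing complexity. In type $\An$ the string polytope associated to the column-reading decomposition $s_1(s_2 s_1)(s_3 s_2 s_1)\cdots$ coincides with the Gelfand--Tsetlin polytope by Littelmann's results and is therefore known to be integral, so the search should prioritize reduced words that lie far from this one: the reversed word, words obtained by many braid moves, or uniform samples from the set of reduced expressions. For $\SL_4$ there are only sixteen reduced expressions of $w_0$, so this case can be exhausted against the fundamental weights, $\rho$, and $2\rho$ in seconds; if no counterexample is detected there, one moves to $\SL_5$ (already $768$ reduced words) and, if still unsuccessful, to $\SL_6$. The weight $2\rho$ is especially interesting since the Main Theorem already constrains the interior lattice-point structure there, making any fractional vertex particularly informative for the reflexivity question.

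The main obstacle I expect is the combinatorial size of the search space combined with the absence of any a priori structural indicator of failure: the integrality of GT-type string polytopes, and the integrality results available for many further families, mean that the counterexample will plausibly come from a somewhat exotic reduced decomposition rather than a natural one. Once the computer produces a candidate fractional vertex, I would verify it by extracting the set of active inequalities, solving the resulting linear system by hand (or symbolically), and confirming that the denominator reflects the true geometry of $\Q_{\underline{w_0}}(\lambda)$ rather than a coordinate artifact. The author's assertion of success signals that the search does terminate; the real content is the careful and exact execution of this search.
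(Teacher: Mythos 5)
Your approach is essentially the paper's own: the conjecture is disproved by an explicit SageMath-computed counterexample, namely $G=\SL_6$ with $\lambda=\omega_3$ and $\underline{w_0}=s_1s_3s_2s_1s_3s_2s_4s_3s_2s_1s_5s_4s_3s_2s_1$ (so $G/P=\mathrm{Gr}(3,6)$), whose string polytope has a single half-integral vertex---exactly what your proposed search over $\SL_6$ against the fundamental weights would detect, the cases $n\leq 4$ having already been verified by Alexeev and Brion. The only mild mismatch is your heuristic to prioritize words far from the standard one: the paper's counterexample lies just two 3-moves (and two 2-moves) away from Littelmann's standard decomposition, which is precisely what allows the computation to be cross-checked against the Berenstein--Zelevinsky description.
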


This conjecture has been verified by Alexeev and Brion for all $n \leq 4$ in \cite{AB}. We will see that it does not hold anymore for $n = 5$.
	
\begin{ex}\label{ex:36}
	Let $G = \SL_6$ and consider the Grassmannian $G/P = \mathrm{Gr}(3,6)$. Choose the reduced decomposition $\underline{w_0} = s_1s_3s_2s_1s_3s_2s_4s_3s_2s_1s_5s_4s_3s_2s_1$. Notice that this reduced decomposition arises from the standard reduced decomposition of \cite{L} by applying two 3-moves (and two 2-moves). Hence we have multiple ways of calculating the string polytopes in addition to the construction by Berenstein and Zelevinsky in \cite[Theorem 3.14]{BZ2}. We find that there exists one non-integral vertex of $\Q_{\underline{w_0}}(\omega_3)$. Luckily this non-integral vertex has half-integral coordinates, so the string polytope for the weight of the anticanonical bundle $\lambda_{\mathrm{Gr}(3,6)} = 6\omega_3$ is again a lattice polytope. 
	
	But this magic trick does not happen every time, since we can enlarge this example in $\mathsf{A}_5$ to a whole class of examples for arbitrary $n$ by using the reduced decomposition $\underline{w_0} = (s_1s_3s_2s_1s_3s_2)(s_4s_3s_2s_1)(s_5s_4s_3s_2s_1) \cdots (s_ns_{n-1} \cdots s_2s_1).$
	The respective string polytope $\Q_{\underline{w_0}}(\omega_3)$ will not be a lattice polytope for $n \geq 5$. In particular for $n = 6$ we can calculate that $\Q_{\underline{w_0}}(\omega_3)$ has half-integral vertices. Thus even for the weight of the anticanonical bundle $\lambda_{\mathrm{Gr(3,7)}} = 7\omega_3$ over $Gr(3,7)$ the string polytope $\Q_{\underline{w_0}}(7\omega_3) = 7\cdot \Q_{\underline{w_0}}(\omega_3)$ will not be a lattice polytope.
\end{ex}

\begin{rem}
	It seems that this observation is connected to the fact that the string polytopes for the reduced decomposition $\underline{w_0} = s_1s_3s_2s_1s_3s_2$ in $\mathsf{A}_3$ do not fulfill the {\em Minkowski property} (also called {\em Integral Decomposition Property}), i.e. for arbitrary $\lambda, \mu \in \Lambda^+$ the lattice points in the string polytope $\Q_{\underline{w_0}}(\lambda+\mu)$ cannot be written as sums of lattice points from $\Q_{\underline{w_0}}(\lambda)$ and $\Q_{\underline{w_0}}(\mu)$. This implies that there exists $\lambda \in \Lambda^+$ such that $\Q_{\underline{w_0}}(\lambda)$ contains lattice points that are not sums of lattice points of the fundamental string polytopes. And although $\mathsf{A}_3$ and $\mathsf{A}_4$ are too small to create non-integral string polytopes, this already foreshadows that something interesting might happen for higher rank.
\end{rem}

\begin{rem}
	In \cite{RW} Rietsch and Williams constructed Newton-Okounkov bodies for Grassmannians using plabic graphs. In some cases their construction leads to non-integral polytopes\,---\,the first two appearing for the same Grassmannian $\mathrm{Gr}(3,6)$. Both of these polytope have a single non-integral vertex as well. I want to thank Valentin Rappel for pointing out this remarkable similarity.

	It is natural to ask whether the string polytope from \cref{ex:36} and the respective Rietsch-Williams polytopes are actually unimodularly equivalent. Joint with Lara Boßinger we were able to show that this is in fact true for one of the two Rietsch Williams polytopes but not true for the other one. It would be very interesting to understand the reason behind this sporadic equivalence.
\end{rem}

So we have seen that in type $\An$ only non-standard reduced decomposition can\,---\,and indeed will\,---\,give rise to non-integral string polytopes. In other types the situation is even more challenging since the standard reduced decompositions of \cite{L} will already provide those as we will see in the next example.

\begin{ex}\label{ex:b2}
	Let $G = \SO_5$ and choose $\underline{w_0}$ to be the standard reduced decomposition from \cite[Section 6]{L}, which is $\underline{w_0} = s_2 s_1 s_2 s_1$, where $\alpha_2$ denotes the short root. Let $\lambda = \omega_2$. The corresponding string polytope contains one half-integral vertex. Even more, the affine hull of its lattice points is two-dimensional while the polytope itself is three-dimensional.
	
	Since the vertices have at worst half-integral coordinates, we see that the string polytope for the weight of the anticanonical bundle $\lambda_{G/P(\alpha_1)} = 4\omega_2$ over $G/P(\alpha_1)$ will be a lattice polytope and by our theorem reflexive after translation by the lattice vector $(1,2,3,0)^T$. 
	
\end{ex}

\begin{rem}
	\cref{ex:b2} contradicts \cite[Theorem 4.5]{AB}, which claims that the string polytope for any (co)minuscule weight and any reduced decomposition must be a lattice polytope. Peter Littelmann and Michel Brion were able to solve this contradiction by finding a fault in the proof of said claim. Essentially the problem arises by applying a result of Caldero and Littelmann on standard monomials. In the proof of \cite[Theorem 4.5]{AB}, the authors construct a sequence of subwords of the longest word of the Weyl group of the form
	\[   \underline{w_0} = s_{i_1} \cdots s_{i_N} \geq s_{i_{j_1}} \cdots s_{i_N} \geq \ldots \geq s_{i_{j_n}} \cdots s_{i_N}    \]
	but the result of Caldero and Littelmann would actually require a sequence of the form 
	\[   \underline{w_0} = s_{i_1} \cdots s_{i_N} \geq s_{i_1} \cdots s_{i_{k_1}} \geq \ldots \geq s_{i_1} \cdots s_{i_{k_n}}.   \]
	I want to thank Peter Littelmann and Michel Brion for explaining this problem.
\end{rem}

From the previous two examples we can already see that sticking to the standard reduced decompositions of \cite{L} might yield some useful results. Known results and many calculations for string polytopes in classical types suggest the following.

\begin{conj}\label{conj:lattice}
	Let $G$ be a complex classical group, let $\lambda \in \Lambda^+$ and let $\underline{w_0}^\mathrm{std}$ be the standard reduced decomposition of the longest word of the Weyl group of $G$ as stated in \cite{L}. Then $\Q_{\underline{w_0}^\mathrm{std}}(\lambda)$ is a lattice polytope if and only if one of the following conditions hold.
	\begin{enumerate}[label=(\roman*)]
		\item $G$ is of type $\An$,
		\item $G$ is of type $\Bn$ and $\langle \lambda, \alpha_n^\vee\rangle \in 2\Z$,
		\item $G$ is of type $\Cn$ or
		\item $G$ is of type $\Dn$ and $\langle \lambda, \alpha_{n-1}^\vee\rangle + \langle \lambda, \alpha_n^\vee \rangle \in 2\Z$ or $n<4$.
	\end{enumerate}
\end{conj}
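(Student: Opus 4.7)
The plan is to prove each classical type separately, using in each case the explicit description of the string cone $C_{\underline{w_0}^{\mathrm{std}}}$ together with the additional shape inequalities that cut out $\Q_{\underline{w_0}^{\mathrm{std}}}(\lambda)$ inside the cone. In types $\An$ and $\Cn$, the standard reduced decomposition of \cite{L} was engineered precisely so that $\Q_{\underline{w_0}^{\mathrm{std}}}(\lambda)$ is unimodularly equivalent to a classical combinatorial polytope: the Gelfand-Tsetlin polytope in type $\An$ and the symplectic Gelfand-Tsetlin polytope of Berenstein-Zelevinsky in type $\Cn$. Each of these is well known to be a lattice polytope for every $\lambda \in \Lambda^+$ (see \cite{GT,BZ1}), which disposes of the ``if'' directions of (i) and (iii) at once.

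For types $\Bn$ and $\Dn$ the defining inequalities of $C_{\underline{w_0}^{\mathrm{std}}}$ recorded by Littelmann \cite{L} and revisited by Berenstein--Zelevinsky \cite{BZ2} contain at least one coefficient equal to $2$, originating from the short/long root asymmetry in type $\Bn$ and from the two tail simple reflections $s_{n-1}, s_n$ in type $\Dn$. The associated shape inequalities reduce modulo $2$ to exactly the parity constraints $\langle\lambda,\alpha_n^\vee\rangle \equiv 0 \pmod 2$ in type $\Bn$ and $\langle\lambda,\alpha_{n-1}^\vee\rangle + \langle\lambda,\alpha_n^\vee\rangle \equiv 0 \pmod 2$ in type $\Dn$. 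The ``if'' direction of (ii) and (iv) should then follow from a vertex enumeration: each vertex of $\Q_{\underline{w_0}^{\mathrm{std}}}(\lambda)$ is the unique solution of a saturated subsystem of the defining inequalities, and I would show that whenever the stated parity holds, every such subsystem admits an integer solution (the $2$-coefficient inequalities being the only possible source of halves). The degenerate case $n<4$ in type $\Dn$ is absorbed by the exceptional isomorphisms $\mathsf{D}_3 \simeq \mathsf{A}_3$ and $\mathsf{D}_2 \simeq \mathsf{A}_1 \times \mathsf{A}_1$, reducing the claim to the already established type $\An$ case.

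The hard direction is the converse of (ii) and (iv): whenever the parity condition fails, one must exhibit an honest half-integer vertex of $\Q_{\underline{w_0}^{\mathrm{std}}}(\lambda)$, generalising the $\omega_2$ phenomenon in type $\mathsf{B}_2$ of \cref{ex:b2}. The plan is to locate a specific face of the string cone defined by a maximal saturated chain of $\pm 1$-inequalities terminating in the shape inequality with coefficient $2$; the unique coordinate solution of such a saturated system is then of the form $\tfrac{1}{2}\langle\lambda,\alpha_n^\vee\rangle$ in type $\Bn$ (respectively the analogous expression in type $\Dn$), which is a half-integer precisely in the excluded parity cases. The main obstacle is verifying that this candidate point is genuinely a vertex and not merely a non-extremal point of a higher-dimensional face of $\Q_{\underline{w_0}^{\mathrm{std}}}(\lambda)$; this amounts to a delicate linear independence check on the gradients of the simultaneously active inequalities, which I expect to require an inductive construction on $n$, seeded by direct computation at the low-rank base cases in the style of \cref{ex:b2} using SageMath \cite{sage}.
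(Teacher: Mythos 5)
First, a point of comparison: the paper does not prove this statement at all. It is stated as a conjecture, supported only by low-rank computations and the examples in \cref{sec:more}, with a remark that the proofs are deferred to the author's thesis \cite{St} and forthcoming work. So there is no in-paper argument to measure your proposal against; it has to stand on its own, and as written it is a strategy outline rather than a proof, with several genuine gaps.

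The central gap is the assertion that ``the $2$-coefficient inequalities \lbrack are\rbrack\ the only possible source of halves.'' Integrality of a polytope is not decided by inspecting coefficients of individual inequalities: systems with only $\pm 1$ coefficients routinely have half-integral vertices (odd-cycle phenomena, as in fractional matching polytopes), and conversely a coefficient $2$ need not produce one. So the ``if'' direction of (ii) and (iv) requires an actual structural argument about the string cone of $\underline{w_0}^\mathrm{std}$ in types $\Bn$ and $\Dn$ (e.g.\ a total-unimodularity or explicit vertex description of the relevant saturated subsystems), which you do not supply; note also that in these types the cone inequalities themselves, not only the $\lambda$-dependent ones, carry coefficient $2$, and \cref{ex:b2} shows the failure of integrality can be quite degenerate (the lattice points span a lower-dimensional affine subspace), so the claim is delicate. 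The converse direction\,---\,producing an honest half-integral vertex whenever the parity fails\,---\,is likewise only announced; you yourself flag the linear-independence check as open, and that check is precisely the mathematical content. Two smaller gaps: the reduction of $\mathsf{D}_3$ to $\mathsf{A}_3$ is not automatic, because string polytopes depend on the chosen reduced word and the image of the standard $\mathsf{D}_3$ word under the exceptional isomorphism need not be the standard $\mathsf{A}_3$ word; since your type-$\An$ argument only covers the standard word (and \cref{ex:36} shows non-standard words in type $\An$ can fail to be integral), you need either the rank~$\leq 4$ verification of \cite{AB} or a direct computation here. Finally, the claimed unimodular equivalences with $GT(\lambda)$ in type $\An$ and with the symplectic Gelfand--Tsetlin polytopes of \cite{BZ1} in type $\Cn$, as well as the integrality of the latter, are citation-level claims that should be pinned down precisely, since the whole ``if'' direction for (i) and (iii) rests on them.
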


Together with \cref{cor:refl} this conjecture would imply the following.

\begin{conj}\label{conj:reflexive}
	Let $G$ be a complex classical group, let $G/P$ be a partial flag variety and let $\underline{w_0}^\mathrm{std}$ be the standard reduced decomposition of the longest word of the Weyl group of $G$ as stated in \cite{L}. Let $\lambda \in \Lambda_P^+$. Then $\Q_{\underline{w_0}^\mathrm{std}}(\lambda)$ is reflexive (after translation by a lattice vector) if and only if $\lambda$ is the weight of the anticanonical bundle over $G/P$.
\end{conj}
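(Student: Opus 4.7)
The plan is to derive \cref{conj:reflexive} from \cref{cor:refl} together with the assumed \cref{conj:lattice}. The key prerequisite is that for the standard reduced decomposition $\underline{w_0}^{\mathrm{std}}$, the string polytope $\Q_{\underline{w_0}^{\mathrm{std}}}(\lambda)$ can be realized as a Newton-Okounkov body $\Delta(\lambda)$ of $G/P$ with respect to $\L_\lambda$ for a valuation satisfying the hypotheses of the Main Theorem; this is established in the classical types by work of Kaveh \cite{Ka}. Under this identification, \cref{cor:refl} applies directly to string polytopes.

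For the forward direction, a polytope that is reflexive after translation by a lattice vector is in particular a lattice polytope, so \cref{cor:refl} immediately forces $\lambda$ to be the weight of the anticanonical bundle $\lambda_{G/P} = \rho + w_I(\rho)$. For the converse, assume $\lambda = \lambda_{G/P}$. By \cref{cor:refl} it suffices to show that $\Q_{\underline{w_0}^{\mathrm{std}}}(\lambda_{G/P})$ is a lattice polytope, and invoking \cref{conj:lattice} this reduces to verifying the parity conditions
\[\langle \lambda_{G/P}, \alpha_n^\vee\rangle \in 2\Z \;\; (\text{type } \Bn), \qquad \langle \lambda_{G/P}, \alpha_{n-1}^\vee + \alpha_n^\vee\rangle \in 2\Z \;\; (\text{type } \Dn,\; n \geq 4),\]
while types $\An$ and $\Cn$ impose no condition at all.

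To check these parities I would expand $\lambda_{G/P} = \sum_{\beta \in \Phi_P^+}\beta$ using \cref{lemma:acweight} and exploit symmetries of $\Phi_P^+$. In type $\Bn$, pairing a root with $\alpha_n^\vee$ takes values in $\{-2,-1,0,1,2\}$, and the involution $\beta \mapsto s_n(\beta)$, wherever defined on $\Phi_P^+$, cancels pairs of summands with opposite odd contributions; only the fixed points (essentially short roots supported away from $\alpha_n$) need to be counted, and these can be enumerated per choice of $I$. An analogous argument using the product $s_{n-1}s_n$ handles the joint pairing for type $\Dn$. Both verifications then reduce to finite, type-specific case checks running over the possible Levi data $I \subseteq S$.

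The main obstacle is conceptual: \cref{conj:lattice} is itself only conjectural, so this strategy produces only a conditional reduction. An unconditional argument would require proving lattice-ness of $\Q_{\underline{w_0}^{\mathrm{std}}}(\lambda_{G/P})$ in types $\Bn$ and $\Dn$ directly\,---\,for instance via an explicit vertex description using Littelmann's string parametrization of dual canonical bases\,---\,and this is exactly the open heart of the problem.
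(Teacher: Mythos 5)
Your conditional derivation is exactly how the paper treats this statement: \cref{conj:reflexive} is presented there as the consequence of \cref{cor:refl} combined with \cref{conj:lattice} (with the unconditional proofs deferred to the thesis \cite{St}), and your identification of string polytopes as Newton--Okounkov bodies satisfying the Main Theorem's hypotheses is the same implicit ingredient the paper uses throughout its applications. The only difference is your parity verification: the paper's accompanying remark observes that the conditions of \cref{conj:lattice} hold whenever $\lambda$ lifts to a character of the underlying classical group $G$, and $\lambda_{G/P}=\sum_{\beta\in\Phi_P^+}\beta$ lies in the root lattice, so no involution argument or case check over the Levi data $I$ is needed (indeed in type $\Bn$ every root already pairs evenly with $\alpha_n^\vee$, and in type $\Dn$ every root pairs evenly with $\alpha_{n-1}^\vee+\alpha_n^\vee$, so there are no odd contributions to cancel).
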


\begin{rem}
	The implication is due to the fact that the conditions in \cref{conj:lattice} are fulfilled, whenever the irreducible highest weight $\mathfrak{g}$-representation $V(\lambda)$ integrates to a representation of the underlying simple algebraic group $G$.
\end{rem}

\begin{rem}
	These conjectures are actually true and their proofs can be found in my PhD thesis \cite{St}. A comprehensive version of this material will appear in one (or two) forthcoming papers.
\end{rem}

In the exceptional cases the situation is even more unclear, as can be seen in our final example.

\begin{ex}
	Let $G$ be of type $\GG$. Consider the anticanonical bundle over the full flag variety $G/B$. We choose $\underline{w_0} = \underline{w_0}^\mathrm{std} = s_1 s_2 s_1 s_2 s_1 s_2$ starting with the short root. 
	Following \cite[Section 2]{L} one calculates that the vertices of $\Q_{\underline{w_0}}(2\rho)$ lie in $\frac{1}{3}\Z$. Hence $\Q_{\underline{w_0}}(2\rho)$ is not a lattice polytope and thus not reflexive even after translation by the unique interior lattice point $(1,2,5,3,4,1)^T$. 
	
	In fact, one can show that for all but one combination of parabolics and reduced decompositions, the respective anticanonical string polytope will not be a lattice polytope. The only exception is the lattice polytope $\Q_{s_2 s_1 s_2 s_1 s_2 s_1}(2\rho)$.
\end{ex}

\end{document}